\newtheorem{proposition}{Proposition}
\newtheorem{corollary}{Corollary}
\newtheorem{remark}{Remark}
\newtheorem{lemma}{Lemma}
\newenvironment{proof}[1][Proof:]{\begin{trivlist}
\item[\hskip \labelsep {\bfseries #1}]}{\end{trivlist}}
\newcommand{\id}{{\mathbf 1}}
\newcommand{\Hol}{\operatorname{Hol}}
\newcommand{\supp}{\operatorname{supp}}
\newcommand{\diag}{\operatorname{diag}}
\newcommand{\Hom}{\operatorname{Hom}}
\newcommand{\Aut}{\mathrm{Aut}}
\newcommand{\Inn}{\mathrm{Inn}}
\newcommand{\ad}{\mbox{\normalfont ad}}
\newcommand{\End}{\mbox{\normalfont End}}
\newcommand{\tr}{\operatorname{tr}}
\newcommand{\aA}{{{\scriptstyle{}^{\scriptstyle a}}\!\!\mathcal{A}}}
\newcommand{\Dil}{\mbox{\normalfont Dil}}
\newcommand{\NM}{{N\!M}}
\newcommand{\bbar}[1]{\overline{\overline{#1}}}
\newcommand{\rank}{{\mathrm{rank}\,}}
\renewcommand*\env@matrix[1][*\c@MaxMatrixCols c]{%
  \hskip -\arraycolsep
  \let\@ifnextchar\new@ifnextchar
  \array{#1}}
\title{Almost Abelian Lie groups, subgroups and quotients}
\author{Marcelo Almora Rios,\,
Zhirayr Avetisyan,\,
Katalin Berlow,\,
Isaac Martin,\\
Gautam Rakholia,\,
Kelley Yang,\,
Hanwen Zhang,\,
Zishuo Zhao
}
\date{}
\begin{document}

\maketitle

\quad

\begin{center}\textit{Dedicated to the memory of Prof. Nikolai Karapetyants on the occasion of his 80th birthday.}\end{center}

\quad

\abstract{An almost Abelian Lie group is a non-Abelian Lie group with a codimension 1 Abelian normal subgroup. The majority of 3-dimensional real Lie groups are almost Abelian, and they appear in all parts of physics that deal with anisotropic media - cosmology, crystallography etc. In theoretical physics and differential geometry, almost Abelian Lie groups and their homogeneous spaces provide some of the simplest solvmanifolds on which a variety of geometric structures such as symplectic, K\"ahler, spin etc., are currently studied in explicit terms. Recently, almost Abelian Lie algebras were classified and studied in details. However, a systematic investigation of almost Abelian Lie groups has not been carried out yet, and the present paper is devoted to an explicit description of properties of this wide and diverse class of groups.

The subject of investigation are real almost Abelian Lie groups with their Lie group theoretical aspects, such as the exponential map, faithful matrix representations, discrete and connected subgroups, quotients and automorphisms. The emphasis is put on explicit description of all technical details.}

\quad

\quad

\pagebreak

%\documentclass{article}

%\input{../include.tex}

%\begin{document}

\section{Introduction}\label{Intro}

In the present paper we consider only real Lie groups and Lie algebras. An almost Abelian Lie algebra is a non-Abelian Lie algebra $\mathbf{L}$ that contains a codimension one Abelian ideal, and an almost Abelian group is a Lie group with an almost Abelian Lie algebra. This is equivalent to demanding that the Lie group contains a codimension one Abelian normal subgroup. In fact, it can be shown that the existence of a codimension one Abelian Lie subgroup already guarantees the existence of a codimension one Abelian normal subgroup.

In low dimensions almost Abelian Lie groups are well-represented. The only 2-dimensional non-Abelian Lie group is almost Abelian, while 6 out of 9 classes (after Bianchi) of 3-dimensional real Lie algebras/groups are almost Abelian. At the same time, since most physical systems are $n=1,2,3$ dimensional, in absence of rotational symmetries a homogeneous (anisotropic) system is described by a low dimensional Lie group. It is thus only natural that almost Abelian Lie groups are widely used in cosmology, where they represent the symmetries of the universe at large scale (\cite{ElEl98}, \cite{Osi73}, \cite{Pet59}, \cite{Rya75}, \cite{AvVe13} and many others), or crystallography, where they model the symmetries of an ideal solid (\cite{Par16} and references therein). As far as applications in pure mathematics are concerned, one particular almost Abelian Lie group is distinguished - the 3-dimensional Heisenberg group (higher dimensional Heisenberg groups are not almost Abelian). Thorough studies of the Heisenberg group can be found, for instance, in \cite{Fol89} and \cite{Tha98}. Taking roots in the foundations of quantum mechanics, this group has become the classical setting for non-commutative analysis. We refer to \cite{FiRu16} for recently developed theory of quantization and pseudodifferential calculus on the Heisenberg group (among other nilpotent groups). It is therefore desirable to try and extend these results to general almost Abelian groups, but that has to wait until a comprehensive study of almost Abelian Lie groups is available.

Higher dimensional almost Abelian Lie groups have gained in popularity in the last two decades, with at least a dozen papers dealing with the subject written in the last two years only. One context of interest is compact solvmanifolds. A solvmanifold is a homogeneous space $G/N$ with $G$ a simply connected solvable Lie group and $N\subset G$ a discrete subgroup. Almost Abelian groups $G$ are special in that, together with nilpotent groups, these are the only solvable Lie groups for which there is a practically useful necessary and sufficient condition for the solvmanifold $G/N$ to be compact \cite{Boc16}. More generally, almost Abelian groups are unique in their explicit tractability combined with diversity of properties they can possess. A plethora of work in differential geometry and theoretical physics has been devoted to various geometrical constructions on almost Abelian solvmanifolds such as symplectic, K\"ahler, spin, $\mathrm{G}_2$ or $\mathrm{SU}(3)$ structures, various flows etc. \cite{Fre12}, \cite{AnOr17}, \cite{AGMP11}, \cite{CoMa12}, \cite{LaWi19},\cite{FSW19}, \cite{Par21}, \cite{Sta20}, \cite{FiPa20a}, \cite{FiPa20}, \cite{BDV19}, \cite{BaFi18}, \cite{FrSw18}, \cite{BDV18}. In spite of this wide spectrum of interest and applications, to the date there is no comprehensive study of almost Abelian Lie groups in the literature. In the recent papers \cite{Avetisyan2016} and \cite{Avetisyan2018} almost Abelian Lie algebras were studied and their structure was explicitly described. The next step is the study of almost Abelian groups from the Lie group theory perspective, which the present work is mainly devoted to. The far-reaching objective of studying almost Abelian groups systematically is building a variety of well-understood ``lighthouses'' in the sea of solvable Lie groups, as representative as possible, in order to facilitate the development of methods and tools applicable to a wide range of groups.

The following results are obtained in this paper. Let $G$ stand for an almost Abelian group. The exponential map $\exp$ on a simply connected $G$ is described explicitly, and two conditions are given which are equivalent to the injectivity of $\exp$ (exponentiality of $G$). Two faithful matrix representations are introduced for simply connected $G$, and the centre $\mathrm{Z}(G)$ is described. The full automorphism group $\Aut(G)$ and the inner automorphism group $\Inn(G)$ are given explicitly for a connected $G$. Discrete normal subgroups of a simply connected $G$ are studied, and conditions are found for two discrete normal subgroups to be related by an automorphism of $G$. This provides a necessary and sufficient condition for two connected $G$ with the same Lie algebra to be isomorphic, and thus a full classification of connected almost Abelian groups. A necessary and sufficient condition is found for a connected $G$ to admit a faithful matrix representation, and one such representation is given explicitly whenever such exists. Connected subgroups $H\subset G$ of connected $G$ are described, and a condition is established that is equivalent to the closedness of $H$ in $G$.
%\end{document}

\quad

%\documentclass{article}

%\input{include.tex}

%\begin{document}

\section*{Matrix representations of simply connected almost Abelian groups}\label{MatRepSCG}
%\subsubsection*{Z. Avetisyan, K. Berlow, G. Rakholia, Z. Zhao}

A real finite-dimensional almost Abelian Lie algebra is a semidirect product $\mathbb{R}^d\rtimes\mathbb{R}$, and is completely determined by the operator $\ad_{e_0}\in\End(\mathbb{R}^d)$, where $e_0=(0,1)\in\mathbb{R}^d\rtimes\mathbb{R}$. It was shown in \cite{Avetisyan2018} that every such algebra is isomorphic to a representative $\aA_\mathbb{R}(\aleph)$ for which $\ad_{e_0}=\operatorname{J}(\aleph)$, where $\operatorname{J}(\aleph)$ is a (real) Jordan canonical form with its spectral structure encoded in the so-called multiplicity function $\aleph:\mathbb{C}\times\mathbb{N}\to\mathbb{N}_0$ (here $\sigma_\mathbb{R}\simeq\mathbb{C}$ is the set of monic irreducible polynomials over $\mathbb{R}$ as identified with the corresponding roots). So let us consider the real finite-dimensional almost Abelian Lie algebra $\aA(\aleph)=\aA_\mathbb{R}(\aleph)$ corresponding to a finite dimensional multiplicity function $\aleph$ given in its faithful matrix representation
\begin{equation}
\aA(\aleph)\simeq\mathbb{R}^d\rtimes\mathbb{R}\ni(v,t)\quad\mapsto\quad\begin{pmatrix}
0 & 0\\
v & t\operatorname{J}(\aleph)
\end{pmatrix}\in\End(\mathbb{R}^{d+1}).\label{aAMatRep}
\end{equation}
It is straightforward to check that
\begin{equation}
\aA(\aleph)\ni\begin{pmatrix}
0 & 0\\
v & t\operatorname{J}(\aleph)
\end{pmatrix}\mapsto\begin{pmatrix}
0 & 0 & 0\\
v & t\operatorname{J}(\aleph) & 0\\
0 & 0 & t
\end{pmatrix}\label{LieAlgFaithRep2}
\end{equation}
and
\begin{equation}
\aA(\aleph)\ni\begin{pmatrix}
0 & 0\\
v & t\operatorname{J}(\aleph)
\end{pmatrix}\mapsto\begin{pmatrix}
0 & 0 & 0\\
v & t\operatorname{J}(\aleph) & 0\\
t & 0 & 0
\end{pmatrix}\label{LieAlgFaithRep3}
\end{equation}
give further faithful matrix representations of $\aA(\aleph)$. In this section we will establish faithful matrix representations for the simply connected almost Abelian groups corresponding to the Lie algebras $\aA(\aleph)$.

Following \cite{Avetisyan2018}, we identify $\sigma_\mathbb{R}$ with $\mathbb{C}$ in the following way. Every monic irreducible polynomial $p\in\sigma_\mathbb{R}$ is identified with its unique real root $x_p$ if it is first order, and with one of the conjugate pair of complex roots $x_p$ and $\bar{x}_p$ (say, the one in the upper half-plane) if it is second order. Denote
\begin{equation}
T_\aleph\doteq\left\{t\in\mathbb{R}\,{\big|}\quad e^{t\operatorname{J}(\aleph)}=\id\right\}\subset\mathbb{R},\label{TalephDef}
\end{equation}
$$
\mathcal{X}_\aleph\doteq\left\{\omega\in\mathbb{R}\,{\big|}\quad\supp\aleph\subset\imath\omega\mathbb{Z}\right\}.
$$
In the next section we will obtain an explicit description of the set $T_\aleph$. At this point we are mainly interested in matrix representations.

\begin{proposition}\label{NaiveMatRepProp} For a finite multiplicity function $\aleph$ let
$$
G\doteq\left\{\begin{pmatrix}
1 & 0\\
v & e^{t\operatorname{J}(\aleph)}
\end{pmatrix}\,\vline\quad(v,t)\in\mathbb{R}^d\oplus\mathbb{R}\right\}.
$$
Then $G$ is a connected Lie group with Lie algebra $\aA(\aleph)$, and it is simply connected if and only if $T_\aleph=\{0\}$.
\end{proposition}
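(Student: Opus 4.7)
The plan is to realize $G$ as a quotient of the simply connected semidirect product $H\doteq\mathbb{R}^d\rtimes\mathbb{R}$, in which $\mathbb{R}$ acts on $\mathbb{R}^d$ by $t\cdot v=e^{t\operatorname{J}(\aleph)}v$, and then to read off connectedness, the Lie algebra, and simple connectedness from the kernel of the natural quotient map to $G$.

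The first step is a block matrix computation showing that the product of two elements of $G$ with parameters $(v_1,t_1)$ and $(v_2,t_2)$ is the element with parameters $(v_1+e^{t_1\operatorname{J}(\aleph)}v_2,\,t_1+t_2)$, and that the inverse stays in $G$. Consequently the smooth parameterization
$$
\Phi:\mathbb{R}^d\oplus\mathbb{R}\to G,\qquad(v,t)\mapsto\begin{pmatrix} 1 & 0 \\ v & e^{t\operatorname{J}(\aleph)} \end{pmatrix},
$$
is a surjective group homomorphism once the domain is endowed with the resulting semidirect product law $H$, and by the very definition of $T_\aleph$ the kernel of $\Phi$ equals $\{0\}\oplus T_\aleph$.

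Now $H\cong\mathbb{R}^{d+1}$ as a smooth manifold, so $H$ is a connected, simply connected Lie group, and differentiating the one-parameter families $s\mapsto\Phi(sv,st)$ at $s=0$ identifies its Lie algebra with
$$
\left\{\begin{pmatrix} 0 & 0 \\ v & t\operatorname{J}(\aleph) \end{pmatrix}\,{\big|}\quad(v,t)\in\mathbb{R}^d\oplus\mathbb{R}\right\}=\aA(\aleph)
$$
in the faithful matrix representation (\ref{aAMatRep}). By Lemma \ref{TalephLemma}, $T_\aleph$ is discrete in $\mathbb{R}$, so $\{0\}\oplus T_\aleph$ is a discrete central normal subgroup of $H$, and the quotient $G=H/(\{0\}\oplus T_\aleph)$ inherits a natural Lie group structure with the same Lie algebra $\aA(\aleph)$. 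Connectedness of $G$ is immediate from that of $H$. Finally, $H\to G$ is a covering with deck group $T_\aleph$, whence $\pi_1(G)\cong T_\aleph$, so $G$ is simply connected iff $T_\aleph=\{0\}$.

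The main delicate point I anticipate is verifying that the smooth structure pushed down from $H$ is compatible with the realization of $G$ as the given set of matrices — that is, that $\Phi$ is a genuine smooth covering projection rather than merely a set-theoretic bijection modulo its kernel. This will follow from $\Phi$ being a smooth immersion (visible directly from the block-matrix formula) combined with the discreteness of its fibers $\{v\}\oplus(t+T_\aleph)$ supplied by Lemma \ref{TalephLemma}.
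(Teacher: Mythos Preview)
Your argument is correct and follows essentially the same route as the paper: both identify the Lie algebra by differentiating curves through the identity, and both rest the simple-connectedness claim on the observation that $G$ is parameterized by $\mathbb{R}^d\times(\mathbb{R}/T_\aleph)$. The paper states this diffeomorphism in one line (``by construction''), whereas you unpack it via the explicit covering homomorphism $\Phi:H\to G$ with deck group $\{0\}\oplus T_\aleph$; the extra care about smooth structures and discreteness is welcome but not a genuinely different strategy.
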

\begin{proof} That $G$ is a connected Lie group is clear from the definition. For $\forall (u,s)\in\mathbb{R}^d\oplus\mathbb{R}$ let
$$
(-1,1)\ni\tau\mapsto\begin{pmatrix}
1 & 0\\
v(\tau) & e^{t(\tau)\operatorname{J}(\aleph)}
\end{pmatrix}\in G
$$
be a smooth curve with
$$
(v(0),t(0))=(0,0),\quad (v'(0),t'(0))=(u,s).
$$
Then
$$
\frac{d}{d\tau}\begin{pmatrix}
1 & 0\\
v(\tau) & e^{t(\tau)\operatorname{J}(\aleph)}
\end{pmatrix}\biggr\rvert_{\tau=0}=\begin{pmatrix}
0 & 0\\
u & s\operatorname{J}(\aleph)
\end{pmatrix}\in\aA(\aleph),
$$
which proves that $\aA(\aleph)$ is the Lie algebra of $G$. Finally, by construction $G$ is diffeomorphic to $\mathbb{R}^d\times(\mathbb{R}/T_\aleph)$, which is simply connected iff $T_\aleph$ is trivial. $\Box$
\end{proof}

If the Lie group in Proposition \ref{NaiveMatRepProp} is simply connected then it is a faithful matrix representation for the simply connected almost Abelian Lie group with Lie algebra $\aA(\aleph)$. But there is a simple modification that yields a faithful matrix representation for every simply connected almost Abelian Lie group.

\begin{proposition}\label{SCMatRepProp} For a finite multiplicity function $\aleph$ let
$$
G_\mathrm{I}\doteq\left\{\begin{pmatrix}
1 & 0 & 0\\
v & e^{t\operatorname{J}(\aleph)} & 0\\
0 & 0 & e^t
\end{pmatrix}\,\vline\quad(v,t)\in\mathbb{R}^d\oplus\mathbb{R}\right\},\quad\quad G_\mathrm{II}\doteq\left\{\begin{pmatrix}
1 & 0 & 0\\
v & e^{t\operatorname{J}(\aleph)} & 0\\
t & 0 & 1
\end{pmatrix}\,\vline\quad(v,t)\in\mathbb{R}^d\oplus\mathbb{R}\right\}.
$$
Then both $G_\mathrm{I}$ and $G_\mathrm{II}$ are simply connected Lie groups with Lie algebras isomorphic to $\aA(\aleph)$.
\end{proposition}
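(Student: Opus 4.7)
The structure of both groups parallels that of the $G$ of the previous proposition, differing only by an extra diagonal block engineered to record the parameter $t$ injectively. Accordingly, my plan is to verify, for each of $G_\mathrm{I}$ and $G_\mathrm{II}$, (i) that it is a matrix Lie subgroup, (ii) that the obvious smooth parametrization $\Phi$ by $(v,t)\in\mathbb{R}^d\oplus\mathbb{R}$ is a diffeomorphism onto the group, (iii) that its tangent space at the identity coincides with the image of the faithful representation (\ref{LieAlgFaithRep2}) of $\aA(\aleph)$, and (iv) that simple connectedness follows from (ii).

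For (i), a direct block multiplication shows that in both cases the product of two elements retains the same shape, with the new auxiliary entry recording the sum $t_1+t_2$: in $G_\mathrm{I}$ via $e^{t_1}e^{t_2}=e^{t_1+t_2}$, and in $G_\mathrm{II}$ via additivity of the $(d+2,1)$-entry, since the middle block does not feed into that new row. Closure under inversion is witnessed by replacing $(v,t)$ with $(-e^{-t\operatorname{J}(\aleph)}v,-t)$. For (ii), $\Phi$ is manifestly smooth, and its injectivity is now immediate: in $G_\mathrm{I}$ the $(d+2,d+2)$-entry $e^t$ determines $t\in\mathbb{R}$ uniquely, while in $G_\mathrm{II}$ the $(d+2,1)$-entry is $t$ itself. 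With $t$ in hand, $v$ is read off the first column of the middle block row, and smoothness of the inverse is clear. Hence each of $G_\mathrm{I},G_\mathrm{II}$ is diffeomorphic to $\mathbb{R}^{d+1}$, which is contractible, yielding (iv).

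For (iii), differentiating at $\tau=0$ a smooth curve with $(v(0),t(0))=(0,0)$ and $(v'(0),t'(0))=(u,s)$, exactly as in the preceding proposition, produces a tangent vector with $u$ in the first column of the middle block row, $s\operatorname{J}(\aleph)$ in the middle block, and $s$ respectively in the $(d+2,d+2)$- or $(d+2,1)$-slot, i.e.\ the image of $(u,s)$ under (\ref{LieAlgFaithRep2}). Since this image is $(d+1)$-dimensional and so is the Lie algebra, it exhausts the tangent space, and (\ref{LieAlgFaithRep2}) identifies the Lie algebra with $\aA(\aleph)$ in each case. No single step is a genuine obstacle; the only delicate point is injectivity of $\Phi$, which is precisely what the extra diagonal block supplies, in a way that was unavailable to the $G$ of the previous proposition whenever $T_\aleph\neq\{0\}$.
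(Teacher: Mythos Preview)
Your argument is correct and follows essentially the same route as the paper: verify the sets are Lie groups, exhibit the diffeomorphism with $\mathbb{R}^{d+1}$ to get simple connectedness, and differentiate a curve through the identity to identify the Lie algebra. The one organizational difference is that the paper carries this out only for $G_\mathrm{I}$ and then observes that the obvious map $G_\mathrm{I}\to G_\mathrm{II}$ sending $e^t$ in the $(d{+}2,d{+}2)$-slot to $t$ in the $(d{+}2,1)$-slot is a Lie group isomorphism, thereby transporting both simple connectedness and the Lie algebra identification to $G_\mathrm{II}$ in one stroke; you instead treat the two groups in parallel. One small slip to fix in your version: the tangent space of $G_\mathrm{II}$ is \emph{not} literally the image of (\ref{LieAlgFaithRep2}), since (\ref{LieAlgFaithRep2}) places $s$ at position $(d{+}2,d{+}2)$, whereas your own computation (correctly) puts it at $(d{+}2,1)$. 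This is an analogous but distinct faithful representation of $\aA(\aleph)$, and you should either say so explicitly (the bracket check is immediate) or, as the paper does, invoke the isomorphism $G_\mathrm{I}\cong G_\mathrm{II}$ to inherit the identification from the $G_\mathrm{I}$ case.
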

\begin{proof} That $G_\mathrm{I}$ and $G_\mathrm{II}$ are Lie groups is clear from the definitions. The map
$$
\mathbb{R}^d\oplus\mathbb{R}\ni(v,t)\mapsto\begin{pmatrix}
1 & 0 & 0\\
v & e^{t\operatorname{J}(\aleph)} & 0\\
t & 0 & 1
\end{pmatrix}\in G_\mathrm{II}
$$
is a diffeomorphism, which proves that $G_\mathrm{II}$ is simply connected. For $\forall (u,s)\in\mathbb{R}^d\oplus\mathbb{R}$ let
$$
(-1,1)\ni\tau\mapsto\begin{pmatrix}
1 & 0 & 0\\
v(\tau) & e^{t(\tau)\operatorname{J}(\aleph)} & 0\\
t(\tau) & 0 & 1
\end{pmatrix}\in G_\mathrm{II}
$$
be a smooth curve with
$$
(v(0),t(0))=(0,0),\quad (v'(0),t'(0))=(u,s).
$$
Then
$$
\frac{d}{d\tau}\begin{pmatrix}
1 & 0 & 0\\
v(\tau) & e^{t(\tau)\operatorname{J}(\aleph)} & 0\\
t(\tau) & 0 & 1
\end{pmatrix}\biggr\rvert_{\tau=0}=\begin{pmatrix}
0 & 0 & 0\\
u & s\operatorname{J}(\aleph) & 0\\
s & 0 & 0
\end{pmatrix},
$$
which through faithful representation (\ref{LieAlgFaithRep3}) shows the isomorphism between $\aA(\aleph)$ and the Lie algebra of $G_\mathrm{II}$. Finally, the map
$$
G_\mathrm{I}\ni\begin{pmatrix}
1 & 0 & 0\\
v & e^{t\operatorname{J}(\aleph)} & 0\\
0 & 0 & e^t
\end{pmatrix}\quad\mapsto\quad\begin{pmatrix}
1 & 0 & 0\\
v & e^{t\operatorname{J}(\aleph)} & 0\\
t & 0 & 1
\end{pmatrix}\in G_\mathrm{II}
$$
is easily checked to be a Lie group isomorphism, which shows that the above statements hold for the Lie group $G_\mathrm{I}$ as well. $\Box$
\end{proof}
Thus, a simply connected almost Abelian Lie group is a semidirect product $G=\mathbb{R}^d\rtimes\mathbb{R}$, which is consistent with the Lie algebra being a semidirect product $\aA(\aleph)=\mathbb{R}^d\rtimes\mathbb{R}$. In order to notationally distinguish between a Lie algebra element in $\mathbb{R}^d\rtimes\mathbb{R}$ and a Lie group element in $\mathbb{R}^d\rtimes\mathbb{R}$ we will use $(v,t)\in\mathbb{R}^d\rtimes\mathbb{R}$ for the former and $[v,t]\in\mathbb{R}^d\rtimes\mathbb{R}$ for the latter, respectively.

%\end{document}

\quad

%\documentclass{article}

%\input{include.tex}

%\begin{document}

\section*{Properties of the exponential map}\label{ExpMap}
%\subsubsection*{M. Almora, Z. Avetisyan, K. Berlow}

We continue working with a real finite-dimensional almost Abelian Lie algebra $\aA(\aleph)$ and the associated simply connected almost Abelian Lie group $G$. Below we will establish technical facts that will together yield the necessary and sufficient condition for the exponential map on $G$ to be a diffeomorphism. It is well known that a solvable real simply connected Lie group fails to be exponential if and only if it contains a copy of $\widetilde{\mathrm{E}_+(2)}$ - the universal cover of the identity component of the Euclidean motion group in $\mathbb{R}^2$. Here we will reestablish this fact in a much more explicit way for the particular case of an almost Abelian simply connected Lie group, and find an equivalent condition in terms of the spectrum of the adjoint representation $\ad_{e_0}=\operatorname{J}(\aleph)$.

In order to describe the exponential map of the simply connected almost Abelian group $G$ in terms of matrix exponentials we need to carefully choose faithful matrix representations for the group $G$ and the Lie algebra $\aA(\aleph)$. Namely, we will choose the algebra representation (\ref{LieAlgFaithRep2}) for the group representation $G_\mathrm{I}$, and the algebra representation (\ref{LieAlgFaithRep3}) for the group representation $G_\mathrm{II}$, respectively (in terms of Proposition \ref{SCMatRepProp}).

\begin{lemma}\label{ExpMapLemma} The exponential map of the simply connected almost Abelian group $G$ corresponding to the almost Abelian Lie algebra $\aA(\aleph)$ can be given by
$$
\exp\begin{pmatrix}
0 & 0 & 0\\
v & t\operatorname{J}(\aleph) & 0\\
0 & 0 & t
\end{pmatrix}\quad=\quad\begin{pmatrix}
1 & 0 & 0\\
\frac{e^{t\operatorname{J}(\aleph)}-\id}{t\operatorname{J}(\aleph)}v & e^{t\operatorname{J}(\aleph)} & 0\\
0 & 0 & e^t
\end{pmatrix}\in G_\mathrm{I}
$$
or
$$
\exp\begin{pmatrix}
0 & 0 & 0\\
v & t\operatorname{J}(\aleph) & 0\\
t & 0 & 0
\end{pmatrix}\quad=\quad\begin{pmatrix}
1 & 0 & 0\\
\frac{e^{t\operatorname{J}(\aleph)}-\id}{t\operatorname{J}(\aleph)}v & e^{t\operatorname{J}(\aleph)} & 0\\
t & 0 & 1
\end{pmatrix}\in G_\mathrm{II}.
$$
\end{lemma}
\begin{proof} It is sufficient to perform matrix exponentiation, which we will do only for $G_\mathrm{II}$, since for $G_\mathrm{I}$ it is very similar. We first observe that
\begin{equation}
\begin{pmatrix}
0 & 0 & 0\\
v & t\operatorname{J}(\aleph) & 0\\
t & 0 & 0
\end{pmatrix}^n=\begin{pmatrix}
0 & 0 & 0\\
[t\operatorname{J}(\aleph)]^{n-1}v & [t\operatorname{J}(\aleph)]^n & 0\\
0 & 0 & 0
\end{pmatrix},\quad\forall n>1,\label{X^n}
\end{equation}
by mathematical induction on $n$. Then by series expansion of the exponential, we see that
\begin{eqnarray}
\exp\begin{pmatrix}
0 & 0 & 0\\
v & t\operatorname{J}(\aleph) & 0\\
t & 0 & 0
\end{pmatrix}\quad=\sum_{n=0}^\infty\frac1{n!}\begin{pmatrix}
0 & 0 & 0\\
v & t\operatorname{J}(\aleph) & 0\\
t & 0 & 0
\end{pmatrix}^n\nonumber\\
=\id+\begin{pmatrix}
0 & 0 & 0\\
v & t\operatorname{J}(\aleph) & 0\\
t & 0 & 0
\end{pmatrix}+\sum_{n=2}^\infty\frac1{n!}\begin{pmatrix}
0 & 0 & 0\\
[t\operatorname{J}(\aleph)]^{n-1}v & [t\operatorname{J}(\aleph)]^n & 0\\
0 & 0 & 0
\end{pmatrix}=\begin{pmatrix}
1 & 0 & 0\\
\frac{e^{t\operatorname{J}(\aleph)}-\id}{t\operatorname{J}(\aleph)}v & e^{t\operatorname{J}(\aleph)} & 0\\
t & 0 & 1
\end{pmatrix}
\end{eqnarray}
gives us the desired result. $\Box$
\end{proof}
Note that Lemma \ref{ExpMapLemma} can now be written as
$$
\exp(v,t)=\left[\frac{e^{t\operatorname{J}(\aleph)}-\id}{t\operatorname{J}(\aleph)}v,t\right],\quad\forall(v,t)\in\aA(\aleph).
$$

\begin{remark}\label{ExpZRemark} It follows that on the Abelian Lie subalgebra $\ker\operatorname{J}(\aleph)\oplus\mathbb{R}$ the exponential map is $$
\exp(v,t)=[v,t],\quad\forall (v,t)\in\ker\operatorname{J}(\aleph)\oplus\mathbb{R}.
$$
\end{remark}

For the next fact note that the Lie algebra of $\mathrm{E}_+(2)$ is $\aA(1\times\imath^1)$.

\begin{lemma}\label{E2=>nexpLemma} If the almost Abelian Lie algebra $\aA(\aleph)$ contains a subalgebra $\mathbf{L}\subseteq\aA(\aleph)$ isomorphic to $\aA(1\times\imath^1)$ then the corresponding simply connected almost Abelian Lie group $G$ is not exponential.
\end{lemma}
\begin{proof} Let $\varphi:\aA(1\times\imath^1)\to\mathbf{L}$ be a Lie algebra isomorphism, and let $H\subset G$ be the connected Lie subgroup with associated Lie algebra $\mathbf{L}$ as given by Theorem 5.20 in \cite{Hall2015}. Let $\exp_1:\aA(1\times\imath^1)\to\widetilde{\mathrm{E}_+(2)}$ be the exponential map on $\widetilde{\mathrm{E}_+(2)}$, and $\exp_2:\mathbf{L}\to H$ be the exponential map on $H$. Assume towards a contradiction that $\exp_2$ is injective. Since $\exp_2$ is injective, $H$ is simply connected. Thus since $H$ and $\widetilde{E_+}(2)$ are both simply connected, by Theorem 5.6 in \cite{Hall2015}, there is a Lie group isomorphism $\Phi:\widetilde{\mathrm{E}_+(2)}\to H$ such that $\Phi(\exp_1(X)) = \exp_2(\varphi(X))$ for all $X\in\aA(1\times\imath^1)$. Since we know that $\exp_1:\aA(1\times\imath^1)\to\widetilde{\mathrm{E}_+(2)}$ is not injective, let $X,Y\in\aA(1\times\imath^1)$ such that $X\neq Y$ and $\exp_1(X)= \exp_1(Y)$. Then $\Phi(\exp_1(X)) = \Phi(\exp_1(Y))$ and therefore $\exp_2(\varphi(X)) = \exp_2(\varphi(Y))$. Since $\exp_2$ is injective, this implies that $\varphi(X) = \varphi(Y)$ in spite of $X \neq Y$, thus contradicting the assumption of $\varphi$ being an isomorphism. This contradiction proves that $\exp_1$ and therefore also $\exp:\aA(\aleph)\to G$ cannot be injective, and $G$ is not exponential. $\Box$
\end{proof}

\begin{lemma}\label{nexp<=>iRLemma} The simply connected Lie group $G$ with almost Abelian Lie algebra $\aA(\aleph)$ fails to be exponential if and only if $\supp\aleph$ contains a polynomial $p$ with non-zero imaginary root $x_p$.
\end{lemma}
\begin{proof} Let us perform some preliminary computations first. From \cite{Avetisyan2018}
\begin{equation}
\operatorname{J}(\aleph)=\bigoplus_{p\in\supp\aleph}\bigoplus_{n=1}^\infty\bigoplus_{\aleph(p,n)}\operatorname{J}(p,n).\label{JBlock}
\end{equation}
Thus, the exponential of the Jordan canonical form above can similarly be decomposed as
\begin{equation}
e^{t\operatorname{J}(\aleph)}=\bigoplus_{p\in\supp\aleph}\bigoplus_{n=1}^\infty\bigoplus_{\aleph(p,n)}e^{t\operatorname{J}(p,n)}.\label{ExpJBlock}
\end{equation}
Now $G$ is not exponential iff
\begin{equation}
\exists(v_1,t_1),(v_2,t_2)\in\aA(\aleph)\quad\mbox{\normalfont s.t.}\quad(v_1,t_1)\neq(v_2,t_2),\quad \exp(v_1,t_1)=\exp(v_2,t_2).\label{existsv1t1v2t2}
\end{equation}
From Lemma \ref{ExpMapLemma} we see that $\exp(v_1,t_1)=\exp(v_2,t_2)$ if and only if $t_1=t_2\doteq t$ and
\begin{equation}
\frac{e^{t\operatorname{J}(\aleph)}-\id}{t\operatorname{J}(\aleph)}(v_1-v_2)=0.\label{exp1=exp2}
\end{equation}
Therefore (\ref{existsv1t1v2t2}) is equivalent to
$$
\exists\,t\in\mathbb{R}\quad\mbox{\normalfont s.t.}\quad\det\left[\frac{e^{t\operatorname{J}(\aleph)}-\id}{t\operatorname{J}(\aleph)}\right]=0.
$$
From (\ref{JBlock}) we find that
$$
\det\left[\frac{e^{t\operatorname{J}(\aleph)}-\id}{t\operatorname{J}(\aleph)}\right]=\prod_{p\in\supp\aleph}\prod_{n=1}^\infty\det\left[\frac{e^{t\operatorname{J}(p,n)}-\id}{t\operatorname{J}(p,n)}\right]^{\aleph(p,n)}.
$$
Thus, (\ref{existsv1t1v2t2}) is equivalent to
$$
\exists\,t\in\mathbb{R},\quad\exists p\in\supp\aleph,\quad\exists n\in\mathbb{N}\quad\mbox{\normalfont s.t.}\quad\aleph(p,n)>0\quad\wedge\quad\det\left[\frac{e^{t\operatorname{J}(p,n)}-\id}{t\operatorname{J}(p,n)}\right]=0.
$$
If $x_p\in\mathbb{R}$ then
$$
\det\left[\frac{e^{t\operatorname{J}(p,n)}-\id}{t\operatorname{J}(p,n)}\right]=\left(\frac{e^{tx_p}-\id}{tx_p}\right)^n>0,
$$
whereas if $x_p=a+\imath b\not\in\mathbb{R}$ then
$$
\det\left[\frac{e^{t\operatorname{J}(p,n)}-\id}{t\operatorname{J}(p,n)}\right]=\left(\frac{(e^{ta}\cos(tb)-1)^2+(e^{ta}\sin(tb))^2}{t^2(a^2+b^2)}\right)^n.
$$
Therefore
$$
\det\left[\frac{e^{t\operatorname{J}(\aleph)}-\id}{t\operatorname{J}(\aleph)}\right]=0\qquad\Leftrightarrow\qquad\exists\,p\in\supp\aleph\quad\mbox{s.t.}\quad\frac{tx_p}{2\pi}\in\imath\mathbb{Z}.
$$
Thus (\ref{existsv1t1v2t2}) is equivalent to
$$
\exists\,p\in\supp\aleph\quad\mbox{s.t.}\quad0\neq x_p\in\imath\mathbb{R},
$$
exactly as in the statement of the lemma. $\Box$
\end{proof}

\begin{lemma}\label{iR=>E2Lemma} If $\supp\aleph$ contains a polynomial $p$ with non-zero imaginary root $x_p$ then there exists a Lie subalgebra $\mathbf{L}\subset\aA(\aleph)$ which is isomorphic to $\aA(1\times\imath^1)$.
\end{lemma}
\begin{proof} Suppose that $\exists\,p\in\supp\aleph$ such that $x_p=\imath b$ with $0\neq b\in\mathbb{R}$ and $\aleph(p,n)>0$ for some $n\in\mathbb{N}$. Fix an $\alpha\in\aleph(p,n)$ and let $\{\xi_\alpha^i(p,n)\}_{i=1}^n$ be the standard basis in the Jordan block $(p,n,\alpha)$ as in \cite{Avetisyan2018}. Let $\mathbf{W}=\mathbb{C}\{\xi_\alpha^1(p,n)\}$ as an $\mathbb{R}$-vector space. By Corollary 3 in \cite{Avetisyan2018}, $\mathbf{W}$ is an $\ad_{e_0}$-invariant subspace, and the restriction $\ad_{e_0}|_\mathbf{W}=x_p=\imath b$, which is $\mathbb{R}$-projectively similar to $\imath$ on $\mathbb{C}$. Thus the Lie subalgebra $\mathbf{L}\rtimes\mathbb{R}e_0\subset\aA(\aleph)$ is nothing else but $\aA(1\times\imath b^1)$, which by Proposition 11 in \cite{Avetisyan2016} is isomorphic to $\aA(1\times\imath^1)$. $\Box$
\end{proof}

Finally we are ready to formulate the main result of this section.

\begin{proposition} A simply connected almost Abelian Lie group with Lie algebra $\aA(\aleph)$ fails to be exponential if and only if $\supp\aleph$ contains a non-zero purely imaginary number, which is equivalent to the existence of a Lie subalgebra isomorphic to $\aA(1\times\imath^1)$.
\end{proposition}
\begin{proof} Follows directly by combining Lemma \ref{E2=>nexpLemma}, Lemma \ref{nexp<=>iRLemma} and Lemma \ref{iR=>E2Lemma}. $\Box$
\end{proof}

In our further studies we will need a precise description of the set $T_\aleph$ defined in (\ref{TalephDef}).

\begin{lemma}\label{TalephLemma} For a given finite real multiplicity function $\aleph$, we have $T_\aleph\neq\{0\}$ if and only if $\aleph(p,n)=0$ for all $p\in\supp\aleph$ and $n>1$ and $\mathcal{X}_\aleph\neq\emptyset$, in which case
$$
T_\aleph=\frac{2\pi}{\omega_0}\mathbb{Z},\quad \omega_0\in\mathcal{X}_\aleph,\quad|\omega_0|=\max\left\{|\omega|\,\vline\quad\omega\in\mathcal{X}_\aleph\right\}.
$$
\end{lemma}
\begin{proof} We recall from \cite{Avetisyan2018} that $\operatorname{J}(p,n)=x_p\id+\operatorname{N}_n$ understood over the field $\mathbb{R}(x_p)\subset\End(\mathbb{R}^{\deg p})$. Continuing from (\ref{ExpJBlock}) we find that
$$
e^{t\operatorname{J}(\aleph)}=\bigoplus_{p\in\supp\aleph}\bigoplus_{n=1}^\infty\bigoplus_{\aleph(p,n)}e^{tx_p}\left(\id+t\operatorname{N}_n+\frac12t^2\operatorname{N}_n^2+\ldots\right),
$$
whence
$$
e^{t\operatorname{J}(\aleph)}=\id\qquad\Leftrightarrow\qquad\left[t=0\quad\mbox{or}\quad\forall p\in\supp\aleph,\,x_p=\imath b_p\in\imath\mathbb{R},\,e^{\imath t b_p}=1,\,\aleph(p,n)=0,\,\forall n>1\right],
$$
which means that
$$
T_\aleph\neq\{0\}\qquad\Leftrightarrow\qquad\exists t\neq0\quad\mbox{s.t.}\quad\forall p\in\supp\aleph,\,x_p=\imath b_p\in\imath\mathbb{R},\,e^{\imath t b_p}=1,\,\aleph(p,n)=0,\,\forall n>1.
$$
Let us show that provided $\aleph(p,n)=0,\,\forall p\in\supp\aleph,\,\forall n>1$, we have
$$
\exists t\neq0\quad\mbox{s.t.}\quad\forall p\in\supp\aleph,\,x_p=\imath b_p\in\imath\mathbb{R},\,e^{\imath t b_p}=1\qquad\Leftrightarrow\qquad\mathcal{X}_\aleph\neq\emptyset.
$$
Indeed, if $t\neq0$ then the condition $e^{\imath t b_p}=1$ can be written as
$$
x_p\in\imath\frac{2\pi}t\mathbb{Z},\quad\forall p\in\supp\aleph,
$$
which implies that
$$
\frac{2\pi}{t}\in\mathcal{X}_\aleph.
$$
Conversely, let $0\neq\omega\in\mathcal{X}_\aleph$. The possibility $\omega=0$ is excluded, since in that case $\supp\aleph=\{0\}$, which together with $n=1$ would imply that $\operatorname{J}(\aleph)=0$, i.e., that the Lie algebra is Abelian. Thus $\omega\neq0$, and setting $t=2\pi/\omega$ we check that $e^{t\operatorname{J}(\aleph)}=\id$, i.e., $t\in T_\aleph$.

Finally, $T_\aleph\subset\mathbb{R}$ is the kernel of the homomorphism $\mathbb{R}\ni t\mapsto e^{t\operatorname{J}(\aleph)}\in\Aut(\mathbb{R}^d)$, and is therefore a discrete subgroup of the form
$$
T_\aleph=t_0\mathbb{Z},\quad |t_0|=\min\left\{|t|\,{\big|}\quad0\neq t\in T_\aleph\right\}.
$$
Since $0\neq\omega\in\mathcal{X}_\aleph$ is equivalent to $2\pi/\omega\in T_\aleph$, we have that
$$
t_0=\frac{2\pi}{\omega_0},\quad|\omega_0|=\max\left\{|\omega|\,\vline\quad\omega\in\mathcal{X}_\aleph\right\},
$$
which completes the proof. $\Box$\
\end{proof}

%\end{document}

\quad

%\documentclass{article}

%\input{include.tex}

%\begin{document}

\section*{Discrete normal subgroups and quotients of simply connected almost Abelian groups}\label{DNSubGr}
%\subsubsection*{Z. Avetisyan, I. Martin, Z. Zhao}

In this section we will describe explicitly the discrete normal subgroups $N$ of a simply connected almost Abelian Lie group $G$. Then we will derive a necessary and sufficient condition for two quotient groups $G/N$ to be isomorphic.

We start by describing the centre of a simply connected almost Abelian Lie group. Recall from \cite{Avetisyan2016} and \cite{Avetisyan2018} that the centre of an almost Abelian Lie algebra $\aA(\aleph)$ is
$$
\mathrm{Z}(\aA(\aleph))=\ker\operatorname{J}(\aleph),
$$
and denote
$$
T_\aleph\doteq\left\{t\in\mathbb{R}\,{\big|}\quad e^{t\operatorname{J}(\aleph)}=\id\right\}\subset\mathbb{R}.
$$

\begin{proposition}\label{ZGProp} The centre of the simply connected almost Abelian Lie group $G$ with Lie algebra $\aA(\aleph)$ is given by
$$
\mathrm{Z}(G)=\exp\bigl[\mathrm{Z}(\aA(\aleph))\bigr]\times T_\aleph=\exp\bigl[\mathrm{Z}(\aA(\aleph))\times T_\aleph\bigr]
$$
$$
=\left\{[u,s]\in\mathbb{R}^d\rtimes\mathbb{R}\,{\big|}\quad u\in\ker\operatorname{J}(\aleph),\quad e^{s\operatorname{J}(\aleph)}=\id\right\}.
$$
The preimage of the identity component of the centre through the exponential map is
$$
\exp^{-1}\bigl[\mathrm{Z}(G)_0\bigr]=\mathrm{Z}(\aA(\aleph)).
$$
\end{proposition}
\begin{proof} Let us use the faithful matrix representation
$$
G=\mathbb{R}^n\rtimes\mathbb{R}\ni[v,t]\quad=\quad\begin{pmatrix}
1 & 0 & 0\\
v & e^{t\operatorname{J}(\aleph)} & 0\\
t & 0 & 1
\end{pmatrix}
$$
provided by Proposition \ref{SCMatRepProp}. Suppose that $[u,s]\in\mathrm{Z}(G)$. Then the following must be satisfied,
$$
[v,t][u,s]=\begin{pmatrix}
1 & 0 & 0\\
v & e^{t\operatorname{J}(\aleph)} & 0\\
t & 0 & 1
\end{pmatrix}\begin{pmatrix}
1 & 0 & 0\\
u & e^{s\operatorname{J}(\aleph)} & 0\\
s & 0 & 1
\end{pmatrix}=\begin{pmatrix}
1 & 0 & 0\\
v+e^{t\operatorname{J}(\aleph)}u & e^{(t+s)\operatorname{J}(\aleph)} & 0\\
t+s & 0 & 1
\end{pmatrix}
$$
$$
=\begin{pmatrix}
1 & 0 & 0\\
u+e^{s\operatorname{J}(\aleph)}v & e^{(t+s)\operatorname{J}(\aleph)} & 0\\
t+s & 0 & 1
\end{pmatrix}=\begin{pmatrix}
1 & 0 & 0\\
u & e^{s\operatorname{J}(\aleph)} & 0\\
s & 0 & 1
\end{pmatrix}\begin{pmatrix}
1 & 0 & 0\\
v & e^{t\operatorname{J}(\aleph)} & 0\\
t & 0 & 1
\end{pmatrix}=[u,s][v,t],\quad\forall(v,t)\in G.
$$
This is equivalent to $v+e^{t\operatorname{J}(\aleph)}u=u+e^{s\operatorname{J}(\aleph)}v$ or
$$
\left(e^{t\operatorname{J}(\aleph)}-\id\right)u=\left(e^{s\operatorname{J}(\aleph)}-\id\right)v,\quad\forall[v,t]\in G.
$$
Setting $v=0$ we have that $(e^{t\operatorname{J}(\aleph)}-\id)u=0$ which forces $\operatorname{J}(\aleph)u=0$ or $u\in\ker\operatorname{J}(\aleph)$, as desired. But if $u$ is such then $(e^{s\operatorname{J}(\aleph)}-\id)v=0$ for all $v$, which means that $e^{s\operatorname{J}(\aleph)}=\id$. The first statement of the proposition now follows from Remark \ref{ExpZRemark}. If $\exp(v,t)=[u,s]\in\mathrm{Z}(G)_0$ then $t=s=0$ and $v=u$, as desired. $\Box$
\end{proof}

Now let us proceed to the discrete normal subgroups $N\subset G$ of a simply connected almost Abelian Lie group.

\begin{proposition}\label{NProp} Every discrete normal subgroup $N\subset G$ of a simply connected almost Abelian Lie group $G$ with Lie algebra $\aA(\aleph)$ is a free group of rank $k\le\dim\ker\operatorname{J}(\aleph)+1$ generated by $\mathbb{R}$-linearly independent elements
$$
[v_1,t_1],\ldots,[v_k,t_k]\in\mathrm{Z}(G)\subset G=\mathbb{R}^d\rtimes\mathbb{R}.
$$
\end{proposition}
\begin{proof} It is well known that every discrete normal subgroup of a connected Lie group is in fact central (e.g., \cite{Hall2015}). Thus it suffices to find discrete subgroups of $\mathrm{Z}(G)$. Notice that for every $[v,t],[u,s]\in\mathrm{Z}(G)$, $[v,t][u,s]=[v+u,t+s]$, so that the restriction of the obvious homeomorphism $f:G\to\mathbb{R}^{d+1}$ to $\mathrm{Z}(G)$ is also an injective Lie group homomorphism
$$
f|_{\mathrm{Z}(G)}:\mathrm{Z}(G)\to\mathbb{R}^{d+1}.
$$
Therefore every discrete subgroup $N\subset\mathrm{Z}(G)$ is mapped to a discrete subgroup $f(N)\subset\mathbb{R}^{d+1}$. As a discrete subgroup of $\mathbb{R}^{d+1}$, $f(N)$ is a free Abelian group generated by $\mathbb{R}$-linearly independent elements $\nu_1,\ldots,\nu_k\in\mathbb{R}^{d+1}$, and their span satisfies
$$
\mathbb{R}\{\nu_i\}_{i=1}^k\subset\mathbb{R}\left\{f(\mathrm{Z}(G))\right\},
$$
which implies that
$$
k\le\dim\mathbb{R}\left\{f(\mathrm{Z}(G))\right\}\le\dim\ker\operatorname{J}(\aleph)+1.
$$
Setting $[v_i,t_i]\doteq f^{-1}(\nu_i)$ for $i=1,\ldots,k$ completes the proof. $\Box$
\end{proof}

Now that we have a description of discrete normal subgroups $N\subseteq G$ of a simply connected almost Abelian Lie group, and since every connected almost Abelian Lie group can be written as a quotient $G/N$ for a corresponding $N$, we have effectively covered all connected almost Abelian Lie groups. Next we want to know for which distinct discrete normal subgroups $N,M\subset G$ the quotient groups $G/N$ and $G/M$ are isomorphic. Below is a pretty quantitative answer to this question. Denote by $\operatorname{q}_N:G\to G/N$ and $\operatorname{q}_M:G\to G/M$ the canonical quotient homomorphisms, and by $\Hom^*(G/N,G/M)$ the set of all Lie group isomorphisms $G/N\to G/M$.

\begin{proposition}\label{G_N=G_M} Let $G$ be a simply connected Lie group and $N,M\subset G$ two discrete normal subgroups. Then
$$
\Hom^*(G/N,G/M)=\left\{\Phi_\NM=\operatorname{q}_M\circ\Phi\circ\operatorname{q}_N^{-1}\,{\big|}\quad\Phi\in\Aut(G),\quad\Phi(N)=M\right\}.
$$
\end{proposition}
\begin{proof} We first prove that
\begin{equation}
\bigl[\,\exists\Phi_\NM\in\Hom^*(G/N,G/M)\quad\mbox{s.t.}\quad\Phi_\NM\circ\operatorname{q}_N=\operatorname{q}_M\circ\Phi\,\bigr]\quad\Leftrightarrow\quad\Phi(N)=M,\quad\quad\forall\Phi\in\Aut(G).\label{PhiN=M}
\end{equation}
Let $\Phi_\NM$ as above be given. Then
$$
\operatorname{q}_M\circ\Phi(n)=\Phi_\NM\circ\operatorname{q}_N(n)=\id,\quad\forall n\in N,
$$
whence $\Phi(n)\in M$, $\forall n\in N$, and thus $\Phi(N)\subset M$. But also
$$
\operatorname{q}_N(\Phi^{-1}(m))=\Phi_\NM^{-1}\circ\operatorname{q}_M\circ\Phi\left(\Phi^{-1}(m)\right)=\Phi_\NM^{-1}\circ\operatorname{q}_M(m)=\id,\quad\forall m\in M,
$$
so that $\Phi^{-1}(m)\in N$, $\forall m\in M$, and thus $\Phi^{-1}(M)\subset N$. We conclude that $\Phi(N)=M$. Conversely, assume that $\Phi(N)=M$. Then
$$
\operatorname{q}_M\circ\Phi(\operatorname{q}_N^{-1}(\id))=\operatorname{q}_M(\Phi(N))=\operatorname{q}_M(M)=\id,
$$
so that $\Phi_\NM\doteq\operatorname{q}_M\circ\Phi\circ\operatorname{q}_N^{-1}:G/N\to G/M$ is well defined. This completes the proof of (\ref{PhiN=M}). It remains to show that every isomorphism $\Psi\in\Hom^*(G/N,G/M)$ arises as $\Psi=\Phi_\NM$ for a unique $\Phi\in\Aut(G)$. To see this let $d\Psi$ be the corresponding Lie algebra automorphism (say, Theorem 3.28 in \cite{Hall2015}).  Then since $G$ is simply connected and has the same Lie algebra as $G/N$ and $G/M$, there exists a unique $\Phi\in\Aut(G)$ such that $d\Phi=d\Psi$. Consider the following two Lie group homomorphisms,
$$
\Psi\circ\operatorname{q}_N:G\to G/N,\quad\operatorname{q}_M\circ\Phi:G\to G/M.
$$
By Proposition 3.30 in \cite{Hall2015},
$$
d\left(\Psi\circ\operatorname{q}_N\right)=d\Psi\circ d\operatorname{q}_N=d\Psi=d\Phi=d\operatorname{q}_M\circ d\Phi=d\left(\operatorname{q}_M\circ\Phi\right).
$$
But then by uniqueness in Theorem 5.6 of \cite{Hall2015} it follows that $\Psi\circ\operatorname{q}_N=\operatorname{q}_M\circ\Phi$, and that $\Phi$ is unique with this property. The assertion is proven. $\Box$
\end{proof}
\noindent In particular, two quotient groups $G/N$ and $G/M$ are isomorphic if and only if $\Hom^*(G/N,G/M)\neq\emptyset$.

%\end{document}

\quad

%\documentclass{article}

%\input{include.tex}

%\begin{document}

\section*{Automorphisms of almost Abelian Lie groups}\label{Aut}
%\subsubsection*{Z. Avetisyan, K. Berlow, I. Martin}

In this section we will find an explicit description of the automorphism group $\Aut(G)$ of a connected almost Abelian Lie group $G$, with each automorphism given as a diffeomorphism in global group coordinates. For this purpose we will first combine Proposition 7, Proposition 8, Proposition 9 and Proposition 10 from \cite{Avetisyan2016} into a single convenient description of automorphisms of an almost Abelian Lie algebra.

\begin{proposition} The automorphism group $\Aut(\aA(\aleph))\subset\End(\mathbb{R}^d\rtimes\mathbb{R})$ of a real almost Abelian Lie algebra $\aA(\aleph)=\mathbb{R}^d\rtimes\mathbb{R}$ takes the form
\begin{equation}
\Aut(\aA(\aleph))=\left\{\begin{pmatrix}
x\\
y\\
t\\
w
\end{pmatrix}\longmapsto\begin{pmatrix}
\alpha\Delta_{22}-\beta_2\gamma_2 & \Delta_{12} & \gamma_1 & \phi_{01}\\
0 & \Delta_{22} & \gamma_2 & 0\\
0 & \beta_2 & \alpha & 0\\
0 & \eta & \rho & \phi_{11}
\end{pmatrix}\begin{pmatrix}
x\\
y\\
t\\
w
\end{pmatrix}\,\vline\quad\begin{matrix}
\alpha,\beta_2,\gamma_1,\gamma_2,\Delta_{12},\Delta_{22}\in\mathbb{R},\\
\alpha\Delta_{22}-\beta_2\gamma_2\neq0,\, \eta,\rho\in\mathbb{R}^{d-2},\\
\phi_{01}\in\Hom(\mathbb{R}^{d-2},\mathbb{R}),\\
\phi_{11}\in\Aut(\mathbb{R}^{d-2})
\end{matrix}\right\}\label{AutH+R}
\end{equation}
if $\aA(\aleph)=\mathbf{H}\oplus\mathbb{R}^{d-2}$ is a central extension of the Heisenberg algebra and
\begin{equation}
\Aut(\aA(\aleph))=\left\{\begin{pmatrix}
\Delta & \gamma\\
0 & \alpha
\end{pmatrix}\,\vline\quad\alpha\in\Dil(\aleph),\quad\gamma\in\mathbb{R}^d,\quad\Delta\in\Aut(\mathbb{R}^d),\quad\Delta\operatorname{J}(\aleph)=\alpha\operatorname{J}(\aleph)\Delta\right\}\label{AutL}
\end{equation}
otherwise.
\end{proposition}

\begin{remark} If we apply formula (\ref{AutL}) to the Lie algebra $\mathbf{H}\oplus\mathbb{R}^{d-2}$ then we will obtain only the subgroup consisting of those automorphisms corresponding to $\beta_2=0$ in formula (\ref{AutH+R}).
\end{remark}

We begin with the case of a simply connected $G$, where there is a bijective correspondence between Lie algebra automorphisms and Lie group automorphisms. On several occasions we will make use of the following elementary fact.

\begin{remark}\label{FABCRemark} If $A$, $B$ and $C$ are square matrices such that $AB=BC$ then for every entire holomorphic function $F\in\Hol(\mathbb{C})$ one has $F(A)B=BF(C)$.
\end{remark}
This can be easily checked term by term in the Taylor expansion.

Let $\mathcal{H}=\exp(\mathbf{H})$ stand for the Heisenberg group.

\begin{proposition}\label{AutGProp} If $G$ is a simply connected almost Abelian Lie group with Lie algebra $\aA(\aleph)$ then
\begin{eqnarray}
\Aut(G)=\left\{\begin{bmatrix}
x\\
y\\
t\\
w
\end{bmatrix}\overset{\Phi}{\longmapsto}\begin{bmatrix}
\left[\alpha\Delta_{22}-\beta_2\gamma_2\right]x+\Delta_{12}y+\gamma_1t+\beta_2\gamma_2ty+\frac12\alpha\gamma_2t^2+\frac12\Delta_{22}\beta_2y^2+\phi_{01}(w)\\
\Delta_{22}y+\gamma_2t\\
\beta_2y+\alpha t\\
\eta y+\rho t+\phi_{11}(w)
\end{bmatrix}\vline\right.\nonumber\\
\left.d\Phi{\big|}_{(0,0)}=\begin{pmatrix}
\alpha\Delta_{22}-\beta_2\gamma_2 & \Delta_{12} & \gamma_1 & \phi_{01}\\
0 & \Delta_{22} & \gamma_2 & 0\\
0 & \beta_2 & \alpha & 0\\
0 & \eta & \rho & \phi_{11}
\end{pmatrix}\in\Aut(\aA(\aleph))\right\}\label{AutHxR}
\end{eqnarray}
if $G=\mathcal{H}\times\mathbb{R}^{d-2}$ is a central extension of the Heisenberg group and
\begin{equation}
\Aut(G)=\left\{[v,t]\overset{\Phi}{\longmapsto}\left[\frac{e^{\alpha t\operatorname{J}(\aleph)}-\id}{\alpha\operatorname{J}(\aleph)}\gamma+\Delta v,\alpha t\right]\,\vline\quad d\Phi{\big|}_{(0,0)}=\begin{pmatrix}
\Delta & \gamma\\
0 & \alpha
\end{pmatrix}\in\Aut(\aA(\aleph))\right\}\label{AutG}
\end{equation}
otherwise.
\end{proposition}
\begin{proof} Central extensions of the Heisenberg group are exponential, and we can use the bijectivity of the exponential map to switch from Lie algebra automorphisms to Lie group automorphisms. Namely, if
$$
d\Phi\begin{pmatrix}
x\\
y\\
t\\
w
\end{pmatrix}=\begin{pmatrix}
\left[\alpha\Delta_{22}-\beta_2\gamma_2\right]x+\Delta_{12}y+\gamma_1t+\phi_{01}(w)\\
\Delta_{22}y+\gamma_2t\\
\beta_2y+\alpha t\\
\eta y+\rho t+\phi_{11}(w)
\end{pmatrix}
$$
then
$$
\Phi\left(\exp\begin{pmatrix}
0 & 0 & 0 & 0\\
x & 0 & t & 0\\
y & 0 & 0 & 0\\
w & 0 & 0 & 0
\end{pmatrix}\right)=\Phi\begin{pmatrix}
1 & 0 & 0 & 0\\
x+\frac{yt}2 & 1 & t & 0\\
y & 0 & 1 & 0\\
w & 0 & 0 & 1
\end{pmatrix}
$$
$$
=\exp\begin{pmatrix}
0 & 0 & 0 & 0\\
\left[\alpha\Delta_{22}-\beta_2\gamma_2\right]x+\Delta_{12}y+\gamma_1t+\phi_{01}(w) & 0 & \beta_2y+\alpha t & 0\\
\Delta_{22}y+\gamma_2t & 0 & 0 & 0\\
\eta y+\rho t+\phi_{11}(w) & 0 & 0 & 0
\end{pmatrix}
$$
$$
=\begin{pmatrix}
1 & 0 & 0 & 0\\
\left[\alpha\Delta_{22}-\beta_2\gamma_2\right]x+\Delta_{12}y+\gamma_1t+\phi_{01}(w)+\frac{\left[\Delta_{22}y+\gamma_2t\right]\left[\beta_2y+\alpha t\right]}2 & 1 & \beta_2y+\alpha t & 0\\
\Delta_{22}y+\gamma_2t & 0 & 1 & 0\\
\eta y+\rho t+\phi_{11}(w) & 0 & 0 & 1
\end{pmatrix},
$$
which yields the desired assertion. For the generic case let us first show that the map
$$
[v,t]\overset{\Phi}{\longmapsto}\left[\frac{e^{\alpha t\operatorname{J}(\aleph)}-\id}{\alpha\operatorname{J}(\aleph)}\gamma+\Delta v,\alpha t\right]
$$
is bijective by checking that its inverse is given by
$$
[v,t]\overset{\Phi^{-1}}{\longmapsto}\left[-\frac{e^{t\operatorname{J}(\aleph)}-\id}{\operatorname{J}(\aleph)}\Delta^{-1}\gamma+\Delta^{-1} v,\frac{t}\alpha\right].
$$
Indeed,
$$
\Phi^{-1}\circ\Phi[v,t]=\left[-\frac{e^{t\operatorname{J}(\aleph)}-\id}{\operatorname{J}(\aleph)}\Delta^{-1}\gamma+\Delta^{-1}\left[\frac{e^{\alpha t\operatorname{J}(\aleph)}-\id}{\alpha\operatorname{J}(\aleph)}\gamma+\Delta v\right],\frac{\alpha t}\alpha\right]=[v,t],
$$
where we used $\alpha\operatorname{J}(\aleph)\Delta=\Delta\operatorname{J}(\aleph)$ and Remark \ref{FABCRemark}. Next we establish that the same map is a Lie group homomorphism,
$$
\Phi[v,t]\cdot\Phi[u,s]=\left[\frac{e^{\alpha t\operatorname{J}(\aleph)}-\id}{\alpha\operatorname{J}(\aleph)}\gamma+\Delta v,\alpha t\right]\cdot\left[\frac{e^{\alpha s\operatorname{J}(\aleph)}-\id}{\alpha\operatorname{J}(\aleph)}\gamma+\Delta u,\alpha s\right]
$$
$$
=\left[\frac{e^{\alpha(t+s)\operatorname{J}(\aleph)}-\id}{\alpha\operatorname{J}(\aleph)}\gamma+\Delta\left[v+e^{t\operatorname{J}(\aleph)}u\right],\alpha(t+s)\right]=\Phi[v+e^{t\operatorname{J}(\aleph)}u,t+s]=\Phi\left([v,t]\cdot[u,s]\right).
$$
Finally, for every $(u,s)\in\mathbb{R}^d\rtimes\mathbb{R}=\aA(\aleph)$ let $(-1,1)\ni\tau\mapsto[v(\tau),t(\tau)]\in G$ be a smooth curve such that $[v(0),t(0)]=[0,0]$ and $(v'(0,t'(0))=(u,s)$. Then
$$
d\Phi(u,s)=\frac{d}{d\tau}\Phi[v(\tau),t(\tau)]|_{\tau=0}=\frac{d}{d\tau}\left[\frac{e^{\alpha t\operatorname{J}(\aleph)}-\id}{\alpha\operatorname{J}(\aleph)}\gamma+\Delta v(\tau),\alpha t(\tau)\right]\biggr\rvert_{\tau=0}=(\Delta u+s\gamma,\alpha s),
$$
which completes the proof. $\Box$
\end{proof}

\begin{remark}\label{HxRBeta3Remark} Again, if we apply formula (\ref{AutG}) to a central extension $G=\mathcal{H}\times\mathbb{R}^{d-2}$ of the Heisenberg group then we will exactly recover those automorphisms with $\beta_2=0$ in formula (\ref{AutHxR}).
\end{remark}

The normal subgroup $\Inn(G)\subset\Aut(G)$ of inner automorphisms contains $\Phi_g\in\Aut(G)$ such that $\Phi_g(h)=ghg^{-1}$ for some $g\in G$ and all $h\in G$.
\begin{corollary} If $G$ is a simply connected almost Abelian Lie group with Lie algebra $\aA(\aleph)$ then
$$
\Inn(G)=\left\{[v,t]\overset{\Phi}{\longmapsto}\left[\frac{e^{t\operatorname{J}(\aleph)}-\id}{\operatorname{J}(\aleph)}\gamma+\Delta v,t\right]\,\vline\quad\gamma\in\operatorname{J}(\aleph)\left(\mathbb{R}^d\right),\quad\Delta=e^{s\operatorname{J}(\aleph)},\quad s\in\mathbb{R}\right\}.
$$
\end{corollary}
\begin{proof} That $\Phi_g\in\Inn(G)$ means that $\Phi_g(h)=ghg^{-1}$, for $g\in G$, $\forall h\in G$. Let $g=[u,s]$ and $h=[v,t]$, so that
$$
\Phi_{[u,s]}[v,t]=[u,s][v,t][u,s]^{-1}=\left[e^{s\operatorname{J}(\aleph)}v-\left(e^{t\operatorname{J}(\aleph)}-\id\right)u,t\right]=\left[\frac{e^{t\operatorname{J}(\aleph)}-\id}{\operatorname{J}(\aleph)}\gamma+\Delta v,t\right],
$$
where
$$
\Delta=e^{s\operatorname{J}(\aleph)},\quad\gamma=-\operatorname{J}(\aleph)u,
$$
precisely as asserted. $\Box$
\end{proof}

We turn now to the case of more general connected almost Abelian Lie group $G/N$ where $G$ is simply connected and $N\subset G$ is a discrete central subgroup. Denote by $\operatorname{q}_N:G\to G/N$ the canonical quotient homomorphism. By Proposition \ref{G_N=G_M} we know that
$$
\Aut(G/N)=\left\{\Phi_N=\operatorname{q}_N\circ\Phi\circ\operatorname{q}_N^{-1}\,{\big|}\quad\Phi\in\Aut(G),\quad\Phi(N)=N\right\}.
$$
We will describe the condition $\Phi(N)=N$ more explicitly using Proposition \ref{AutGProp}. The following simple fact will come in handy.

\begin{lemma}\label{LintLemma} If $e^{t\operatorname{J}(\aleph)}=\id$ then $\aA(\aleph)=\mathbf{L}_0\oplus\mathbf{W}$ where $\mathbf{L}_0$ is indecomposable and $\mathbf{W}=\ker\operatorname{J}(\aleph)$, and
$$
\frac{e^{\alpha t\operatorname{J}(\aleph)}-\id}{\alpha\operatorname{J}(\aleph)}=t\left[0_{\mathbf{L}_0}\oplus\id_{\mathbf{W}}\right],\quad\forall\alpha\in\Dil(\aleph).
$$
\end{lemma}
\begin{proof} Note that
$$
\frac{e^{\alpha t\operatorname{J}(\aleph)}-\id}{\alpha\operatorname{J}(\aleph)}=t\frac{e^{\alpha t\operatorname{J}(\aleph)}-\id}{\alpha t\operatorname{J}(\aleph)}.
$$
If $t=0$ then
$$
\frac{e^{\alpha t\operatorname{J}(\aleph)}-\id}{\alpha t\operatorname{J}(\aleph)}=\id
$$
and the assertion is clear. If $t\neq0$ then $t\in T_\aleph\neq\emptyset$, and by Lemma \ref{TalephLemma}
$$
\frac{e^{\alpha t\operatorname{J}(\aleph)}-\id}{\alpha t\operatorname{J}(\aleph)}=\bigoplus_{p\in\supp\aleph}\bigoplus_{\aleph(p,1)}\frac{e^{\alpha t x_p}-\id}{\alpha t x_p}=\left[\bigoplus_{X\neq p\in\supp\aleph}\bigoplus_{\aleph(p,1)}\frac{e^{\alpha t x_p}-\id}{\alpha t x_p}\right]\,\bigoplus\,\left[\bigoplus_{\aleph(X,1)}\frac{e^{\alpha t x_p}-\id}{\alpha t x_p}\right]
$$
$$
=\left[0_{\mathbf{L}_0}\oplus\id_{\mathbf{W}}\right],
$$
as desired. $\Box$
\end{proof}

Now fix a central discrete subgroup $N\subset G$ and let by Proposition \ref{NProp} $N$ be generated by $\{[x_i,0,0,w_i]\}_{i=1}^k$ if $G=\mathcal{H}\times\mathbb{R}^{d-2}$ and $\{[v_i,t_i]\}_{i=1}^k$ otherwise.

\begin{proposition} In terminology of Proposition \ref{AutGProp}, an automorphism $\Phi\in\Aut(G)$ satisfies $\Phi(N)=N$ if and only if
\begin{equation}
\begin{pmatrix}
\alpha\Delta_{22}-\beta_2\gamma_2 & \phi_{01}\\
0 & \phi_{11}
\end{pmatrix}\begin{pmatrix}
x_1 & \ldots & x_k\\
w_1 & \ldots & w_k
\end{pmatrix}=\begin{pmatrix}
x_1 & \ldots & x_k\\
w_1 & \ldots & w_k
\end{pmatrix}\cdot A,\quad A\in\mathrm{GL}(\mathbb{Z},k)\label{PhiN=NHxR}
\end{equation}
for $G=\mathcal{H}\times\mathbb{R}^{d-2}$ and
\begin{equation}
\begin{pmatrix}
\Delta & \gamma_\mathbf{W}\\
0 & \alpha
\end{pmatrix}\begin{pmatrix}
v_1 & \ldots & v_k\\
t_1 & \ldots & t_k
\end{pmatrix}=\begin{pmatrix}
v_1 & \ldots & v_k\\
t_1 & \ldots & t_k
\end{pmatrix}\cdot A,\quad A\in\mathrm{GL}(\mathbb{Z},k)\label{PhiN=NG}
\end{equation}
otherwise. Here $\gamma_\mathbf{W}=\left[0_{\mathbf{L}_0}\oplus\id_{\mathbf{W}}\right]\gamma$ as per Lemma \ref{LintLemma}.
\end{proposition}
\begin{proof} If $G=\mathcal{H}\times\mathbb{R}^{d-2}$ then for every $\Phi\in\Aut(G)$ the condition $\Phi(N)\subset N$ can be expressed as the statement that for every fixed $1\le i_0\le k$, the image $\Phi([x_{i_0},0,0,w_{i_0}]$ is an integer linear combination of $\{[x_i,0,0,w_i]\}_{i=1}^k$. In matrix language of Proposition \ref{AutGProp} this can be written as
$$
\Phi\begin{bmatrix}
x_{i_0}\\
0\\
0\\
w_{i_0}
\end{bmatrix}=\begin{bmatrix}
\left[\alpha\Delta_{22}-\beta_2\gamma_2\right] x_{i_0}+\phi_{01}(w_{i_0})\\
0\\
0\\
\phi_{11}(w_{i_0})
\end{bmatrix}=\begin{pmatrix}
\alpha\Delta_{22}-\beta_2\gamma_2 & 0 & 0 & \phi_{01}\\
0 & 0 & 0 & 0\\
0 & 0 & 0 & 0\\
0 & 0 & 0 & \phi_{11}
\end{pmatrix}\begin{bmatrix}
x_{i_0}\\
0\\
0\\
w_{i_0}
\end{bmatrix}
$$
$$
=\begin{pmatrix}
x_1 & \ldots & x_k\\
0 & \ldots & 0\\
0 & \ldots & 0\\
w_1 & \ldots & w_k
\end{pmatrix}\begin{bmatrix}
A_{1\,i_0}\\
\ldots\\
A_{k\,i_0}
\end{bmatrix},\quad A_{i\,i_0}\in\mathbb{Z},\quad i=1,\ldots, k.
$$
Combining these statements for all $i_0=1,\ldots,k$ we obtain the formula (\ref{PhiN=NHxR}) with $A$ being a $k\times k$ matrix with integer entries. Following the same logic for $\Phi^{-1}(N)\subset N$ we will obtain a similar formula where the matrix $A^{-1}$ figures and is supposed to have integer coefficients. But $\Phi(N)=N$ is equivalent to $\Phi(N)\subset N$ and $\Phi^{-1}(N)\subset N$, which holds if and only if both $A$ and $A^{-1}$ have integer entries, i.e., $A\in\mathrm{GL}(\mathbb{Z},k)$, as desired. If $G\neq\mathcal{H}\times\mathbb{R}^{d-2}$ then by Proposition \ref{ZGProp} we see that $e^{t_i\operatorname{J}(\aleph)}=\id$ for all $i=1,\ldots,k$. Thus by Proposition \ref{AutGProp} and Lemma \ref{LintLemma} the condition $\Phi(N)\subset N$ becomes
$$
\Phi\begin{bmatrix}
v_{i_0}\\
t_{i_0}
\end{bmatrix}=\begin{bmatrix}
\frac{e^{\alpha t_{i_0}\operatorname{J}(\aleph)}}{\alpha\operatorname{J}}\gamma+\Delta v_{i_0}\\
\alpha t_{i_0}
\end{bmatrix}=\begin{pmatrix}
\Delta & \gamma_\mathbf{W}\\
0 & \alpha
\end{pmatrix}\begin{bmatrix}
v_{i_0}\\
t_{i_0}
\end{bmatrix}=\begin{pmatrix}
v_1 & \ldots & v_k\\
t_1 & \ldots & t_k
\end{pmatrix}\begin{bmatrix}
A_{1\,i_0}\\
\ldots\\
A_{k\,i_0}
\end{bmatrix},\quad A_{i\,i_0}\in\mathbb{Z},\quad i=1,\ldots, k.
$$
Combining these statements for all $i_0=1,\ldots,k$ we obtain the formula (\ref{PhiN=NG}) with $A$ being a $k\times k$ matrix with integer entries. The rest of the argument follows as before. $\Box$
\end{proof}

\begin{remark} Let $N\subset G$ be a discrete central subgroup. Since all $\Phi\in\Inn(G)$ act trivially on $N\subset\mathrm{Z}(G)$, it follows that $\Phi(N)=N$ is satisfied automatically.
\end{remark}

%\end{document}

\quad

%\documentclass{article}

%\input{include.tex}

%\begin{document}

\section*{Discrete normal subgroups and quotients of simply connected almost Abelian groups revisited}\label{DNSubGr2}
%\subsubsection*{Z. Avetisyan, I. Martin, Z. Zhao}

Pursuant to the aims of Proposition \ref{G_N=G_M}, in this section we want to derive necessary and sufficient conditions for two discrete central subgroups $N,M\subset G$ to be related by an automorphism $\Phi\in\Aut(G)$ of the simply connected almost Abelian Lie group $G$. We begin with preparatory steps with a discrete central subgroup $N\subset G$ given in terms of a set of generators $[v_1,t_1],\ldots,[v_k,t_k]$ according to Proposition \ref{NProp}. Every other set of generators $[u_1,s_1],\ldots,[u_k,s_k]$ of $N$ is related to the original one by
$$
\begin{pmatrix}
u_1\ldots u_k\\
s_1\ldots s_k
\end{pmatrix}=\begin{pmatrix}
v_1\ldots v_k\\
t_1\ldots t_k
\end{pmatrix}\,\cdot\,A,\quad A\in\mathrm{GL}(\mathbb{Z},k).
$$
According to Lemma \ref{TalephLemma}, there exists $t_0\in T_\aleph$ and $n_1,\ldots,n_k\in\mathbb{Z}$ such that $t_i=n_it_0$, $i=1,\ldots,k$.
\begin{lemma}\label{GenReduceLemma} There exists a change of generators $A\in\mathrm{GL}(\mathbb{Z},k)$ such that
$$
\begin{pmatrix}
u_1 & u_2 & \ldots & u_k\\
s_1 & 0 & \ldots & 0
\end{pmatrix}=\begin{pmatrix}
v_1 & v_2 & \ldots & v_k\\
t_1 & t_2 & \ldots & t_k
\end{pmatrix}\,\cdot\, A.
$$
\end{lemma}
\begin{proof}This can be achieved easily by column operations justified with Bezout's identity. See Appendix \ref{Appendix}. $\Box$
\end{proof}
In what follows we will assume that a discrete central subgroup $N\subset G$ is given by a set of generators in the more economic form $[v_1,t_1],[v_2,0],\ldots,[v_k,0]$. In terminology of formula (16) in \cite{Avetisyan2018},
$$
\ker\operatorname{J}(\aleph)=\bigoplus_{n=1}^\infty\bigoplus_{\aleph(X,n)}\mathbb{R}e^1_\alpha(X,n),
$$
or in other words, the vectors $v_1,\ldots,v_k\in\mathbb{R}^d$ written in the standard basis ${e^m_\alpha(p,n)}$ may have non-zero entries only in the rows corresponding to the topmost elements of the Jordan blocks with eigenvalue zero. Let $\tilde v_1,\ldots,\tilde v_k\in\mathbb{R}^q$, $q\doteq\dim\ker\operatorname{J}(\aleph)$, be the vectors obtained by picking only these significant rows. We have seen in Proposition \ref{AutGProp} that operators $\Delta\in\Aut(\mathbb{R}^d)$ with $[\Delta,\operatorname{J}(\aleph)]=0$ play a prominent role in the structure of automorphisms of $G$. Such an operator $\Delta$ preserves the invariant subspace $\ker\operatorname{J}(\aleph)$, and we denote the restriction of $\Delta$ to $\ker\operatorname{J}(\aleph)$ by $\tilde\Delta\in\Aut(\mathbb{R}^q)$. Let us now assume that Jordan blocks in $\operatorname{J}(\aleph)$ are ordered by non-decreasing block dimension $n$. Applying Proposition 7 and Lemma 2 from \cite{Avetisyan2018}, we see that $\Delta=\tilde\Delta\oplus 0$ (i.e., the matrix $\Delta$ beyond the submatrix $\tilde\Delta$ is identically zero) and $\tilde\Delta$ is an arbitrary real invertible block-upper-triangular matrix with blocks corresponding to constant Jordan block dimension $n$. That means,
$$
\tilde\Delta=\begin{pmatrix}
\tilde\Delta_{n_1n_1} & \tilde\Delta_{n_1n_2} & \ldots & \tilde\Delta_{n_1n_s}\\
0 & \tilde\Delta_{n_2n_2} & \ldots & \tilde\Delta_{n_2n_s}\\
\ldots & \ldots & \ldots & \ldots\\
0 & 0 & \ldots & \tilde\Delta_{n_sn_s}
\end{pmatrix},\quad\tilde\Delta_{n_in_j}\in\Hom(\mathbb{R}^{q_j},\mathbb{R}^{q_i}),\quad\aleph(X,n_i)=q_i,\quad i,j=1,\ldots,s,
$$
$$
q_1+\ldots+q_s=q,\quad n_i>n_{i+1},\quad i=1,\ldots,s-1.
$$
The following simple observation will be useful in what follows.
\begin{remark}\label{TalephRemark} In terminology of \cite{Avetisyan2018}, $\mathrm{Dil}(\aleph)\subset\mathbb{R}^*$ is a finite multiplicative subgroup and therefore $\mathrm{Dil}(\aleph)\subset\mathbb{Z}_2$. If $\supp\aleph\subset\imath\mathbb{R}$, which by Lemma \ref{TalephLemma} is the case when $T_\aleph\neq\{0\}$, then necessarily $\mathrm{Dil}(\aleph)=\mathbb{Z}_2$.
\end{remark}

\begin{proposition} Two discrete central subgroups $N$ and $M$ given in terms of generators $[v_1,t_1],[v_2,0],\ldots,[v_k,0]$ and $[u_1,s_1],[u_2,0],\ldots,[u_k,0]$, respectively, are related by an automorphism of $G$ if and only if $t_1=\pm s_1$ and there exist $\tilde\Delta$ as above and an $A\in\mathrm{GL}(\mathbb{Z},k)$ such that
$$
\tilde\Delta\cdot\left(
\tilde v_1\,\tilde v_2\,\ldots\,\tilde v_k
\right)=\left(
\tilde u_1\,\tilde u_2\,\ldots\,\tilde u_k
\right)\cdot A\quad\mbox{if}\quad t_1=0
$$
and
$$
\tilde\Delta\cdot\left(
\tilde w\,\tilde v_2\,\ldots\,\tilde v_k
\right)=\left(
\tilde u_1\,\tilde u_2\,\ldots\,\tilde u_k
\right)\cdot A\quad\mbox{if}\quad t_1\neq0,
$$
where $\tilde w\in\mathbb{R}^q$ can be chosen arbitrarily.
\end{proposition}
\begin{proof} The subgroup $N$ is mapped to the subgroup $M$ by an automorphism $\Phi\in\Aut(G)$ if and only if the generators $[v_1,t_1],[v_2,0],\ldots,[v_k,0]$ are mapped to any set of generators of $M$, which must be related to the original generators $[u_1,s_1],[u_2,0],\ldots,[u_k,0]$ through a matrix $A\in\mathrm{GL}(\mathbb{Z},k)$, i.e.,
$$
\Phi\begin{pmatrix}
v_1 & v_2 & \ldots & v_k\\
t_1 & 0 & \ldots & 0
\end{pmatrix}=\begin{pmatrix}
u_1 & u_2 & \ldots & u_k\\
s_1 & 0 & \ldots & 0
\end{pmatrix}\cdot A.
$$
By Proposition \ref{AutGProp} this amounts to
$$
\begin{pmatrix}
\Delta & \gamma\\
0 & \alpha
\end{pmatrix}\cdot\begin{pmatrix}
v_1 & v_2 & \ldots & v_k\\
t_1 & 0 & \ldots & 0
\end{pmatrix}=\begin{pmatrix}
u_1 & u_2 & \ldots & u_k\\
s_1 & 0 & \ldots & 0
\end{pmatrix}\cdot A,
$$
since even for $\mathcal{H}\times\mathbb{R}^{d-2}$ the coefficient $\beta_2$ has no effect in acting on vectors from $\ker\operatorname{J}(\aleph)$. By Remark \ref{TalephRemark} we have $\alpha=\pm1$ so that $t_1=\pm s_1$. Further,
$$
\Delta\cdot(v_1,\,v_2,\,\ldots\,v_k)+(t_1\gamma,\,0,\,\ldots,\,0)=(v_1,\,v_2,\,\ldots\,v_k)\cdot A,
$$
where the choice of $\gamma\in\mathbb{R}^d$ is completely arbitrary. The assertion now follows by restricting the above equation to $\ker\operatorname{J}(\aleph)$. $\Box$
\end{proof}
Finding algebraic criteria under which the above conditions are satisfied is a hard problem which we will not pursue here.

As a simple side result, the structure of a discrete central subgroup $N\subset G$ can be simplified further using automorphisms. In the above economic form of the basis for $N$ the element $v_1$ is arbitrary, and it need not be possible to kill $v_1$ by any further right $\mathrm{GL}(\mathbb{Z},k)$ action. Instead, we can use automorphisms of $G$ to achieve that simplification.
\begin{proposition}\label{GenFReduceProp} For every discrete central subgroup $N\subset G$ of a simply connected almost Abelian group $G=\mathbb{R}^d\rtimes\mathbb{R}$ with Lie algebra $\aA(\aleph)$ there exists an automorphism $\Phi\in\Aut(G)$ such that the discrete central subgroup $M=\Phi(N)$ satisfies $M=(M\cap\ker\operatorname{J}(\aleph))\times(M\cap T_\aleph)$.
\end{proposition}
\begin{proof} Let $N$ be given in terms of the generators $[v_1,t_1],[v_2,0],\ldots,[v_k,0]$. If $t_1=0$ then $N\subset\ker\operatorname{J}(\aleph)$ and the assertion is trivial. Assume that $t_1\neq0$, so that by Lemma \ref{TalephLemma} we have $\supp\aleph\subset\imath\mathbb{R}$, and therefore $\mathrm{Dil}(\aleph)=\mathbb{Z}_2$ \cite{Avetisyan2018}. Choose $\Phi$ according to Proposition \ref{AutGProp} with $\alpha=\mathrm{sgn}\,t_1$, $\Delta=\id$ and $\gamma=-\frac1{t_1}v_1$. Then $M=\Phi(N)$ is given by the set of generators
$$
\begin{pmatrix}
\id & -\frac1{t_1}v_1\\
0 & \mathrm{sgn}\,t_1
\end{pmatrix}\cdot\begin{pmatrix}
v_1 & v_2 & \ldots & v_k\\
t_1 & 0 & \ldots & 0
\end{pmatrix}=\begin{pmatrix}
0 & v_2 & \ldots & v_k\\
|t_1| & 0 & \ldots & 0
\end{pmatrix},
$$
whence the statement of the proposition follows. $\Box$
\end{proof}

%\end{document}

\quad

%\documentclass{article}

%\input{include.tex}

%\begin{document}

\section*{Connected almost Abelian groups}\label{CG}
%\subsubsection*{Z. Avetisyan, I. Martin, Z. Zhao}

The goal of this section is to describe connected (not necessarily simply connected) almost Abelian groups in terms of faithful matrix representations whenever the latter exist. Recall that a connected almost Abelian group can be written as $G/N$ where the universal cover $G$ is a simply connected almost Abelian group and $N\subset G$ is a discrete central subgroup. Regardless of whether $G/N$ is a matrix group, the matrix representation of $G$ can be used to produce a natural (almost global) coordinate chart on $G/N$ as follows. Consider a modification of the second faithful matrix representation of $G$ from Proposition \ref{SCMatRepProp} as a faithful "quotient-matrix" representation of $G/N$,
$$
G/N\ni[v,t]\mod N\quad\mapsto\quad\begin{pmatrix}
\begin{matrix}
1\\
\begin{bmatrix}
v\\
t
\end{bmatrix}_{\!\!\!\!\!\!\mod N}
\end{matrix} & \begin{matrix}
0 & 0\\
e^{t\operatorname{J}(\aleph)} & 0\\
0 & 1
\end{matrix}
\end{pmatrix}\in\End(\mathbb{R}^{d+2}).
$$
This representation is algebraically convenient since by Proposition \ref{NProp} we know that $N$ can be seen as an additive subrgoup of $\mathbb{R}^{d+1}$, and $[v,t]\mod N$ is easy to compute. In a neighbourhood of the identity the above representation coincides with the true faithful matrix representation of $G$.

Let us now turn to proper faithful matrix representations. The following provides an explicit faithful matrix representation for a quotient group $G/N$ under certain assumptions on $N$. Let
$$
\aA(\aleph)=\mathbb{R}^{d_0}\rtimes\mathbb{R}\oplus\mathbb{R}^{d-d_0}
$$
be a decomposition of $\aA(\aleph)$ as in [Ave16] where $\mathbb{R}^{d_0}\rtimes\mathbb{R}$ is indecomposable. Then the simply connected group decomposes as $G=G_0\times\mathbb{R}^{d-d_0}$. The first faithful representation of $G$ from Proposition \ref{SCMatRepProp}, upon substitution of the decomposition $\mathbb{R}^d\ni u\mapsto v\oplus w\in\mathbb{R}^{d_0}\oplus\mathbb{R}^{d-d_0}$, gives
$$
G=\mathbb{R}^{d_0}\rtimes\mathbb{R}\times\mathbb{R}^{d-d_0}\ni[v,t,w]\quad\mapsto\quad\begin{pmatrix}
1 & 0 & 0 & 0\\
v & e^{t\operatorname{J}(\aleph_0)} & 0 & 0\\
w & 0 & \id & 0\\
0 & 0 & 0 & e^t
\end{pmatrix}\in\End(\mathbb{R}^{d+1}).
$$
If we denote by $\diag w$ the $(d-d_0)$-dimensional diagonal matrix composed of components of $w$ then it can be easily checked that
\begin{equation}
\begin{pmatrix}
1 & 0 & 0 & 0\\
v & e^{t\operatorname{J}(\aleph_0)} & 0 & 0\\
w & 0 & \id & 0\\
0 & 0 & 0 & e^t
\end{pmatrix}\quad\mapsto\quad\begin{pmatrix}
1 & 0 & 0 & 0\\
v & e^{t\operatorname{J}(\aleph_0)} & 0 & 0\\
0 & 0 & e^{\diag w} & 0\\
0 & 0 & 0 & e^t
\end{pmatrix}\label{DecompFaithRep}
\end{equation}
is a matrix Lie group isomorphism, and therefore the right hand side is another faithful matrix representation of $G$.

Assume now that the discrete central subgroup satisfies $N\subset\mathbb{R}^{d-d_0}\times T_\aleph$, i.e., per Proposition \ref{NProp}, is generated by
$$
[w_1,t_1],\ldots,[w_k,t_k]\in\mathbb{R}^{d-d_0}\times T_\aleph,\quad 0\le k\le d-d_0+1.
$$
The representation on the right hand side of (\ref{DecompFaithRep}) is convenient in that it allows to reshuffle the last $d-d_0+1$ dimensions in way to separate the generators of $N$. Namely, complete arbitrarily the above generators of $N$ to a basis in $\mathbb{R}^{d-d_0}\oplus\mathbb{R}$,
$$
[w_1,t_1],\ldots,[w_{d-d_0+1},t_{d-d_0+1}]\in\mathbb{R}^{d-d_0}\oplus\mathbb{R},
$$
and consider the inverse $\operatorname{P}\in\End(\mathbb{R}^{d-d_0+1})$ of the matrix with columns being elements of this basis,
\begin{equation}
\operatorname{P}\doteq\begin{pmatrix}
w_1 & \ldots & w_{d-d_0+1}\\
t_1 & \ldots & t_{d-d_0+1}
\end{pmatrix}^{-1}.\label{PDef}
\end{equation}
Let $\operatorname{P}_\parallel\in\Hom(\mathbb{R}^{d-d_0+1},\mathbb{R}^k)$ represent the first $k$ rows of $\operatorname{P}$, and $\operatorname{P}_\perp\in\Hom(\mathbb{R}^{d-d_0+1},\mathbb{R}^{d-d_0+1-k})$ the remaining rows.

\begin{proposition}\label{FaithfulRepProp} If the discrete central subgroup satisfies $N\subset\mathbb{R}^{d-d_0}\times T_\aleph$ then the map
$$
G/N\ni[v,t,w]\mod N\quad\mapsto\quad\begin{pmatrix}
1 & 0 & 0 & 0\\
v & e^{t\operatorname{J}(\aleph_0)} & 0 & 0\\
0 & 0 & e^{\diag2\pi\imath\operatorname{P}_\parallel[w,t]^\top} & 0\\
0 & 0 & 0 & e^{\diag\operatorname{P}_\perp[w,t]^\top}
\end{pmatrix}\quad\in\End(\mathbb{R}^{d+2})
$$
is a faithful matrix representation of $G/N$.
\end{proposition}
\begin{proof} In view of (\ref{DecompFaithRep}) being a faithful representation of $G$, it suffices to show that the map
$$
\begin{pmatrix}
1 & 0 & 0 & 0\\
v & e^{t\operatorname{J}(\aleph_0)} & 0 & 0\\
0 & 0 & e^{\diag w} & 0\\
0 & 0 & 0 & e^t
\end{pmatrix}\quad\mapsto\quad\begin{pmatrix}
1 & 0 & 0 & 0\\
v & e^{t\operatorname{J}(\aleph_0)} & 0 & 0\\
0 & 0 & e^{\diag2\pi\imath\operatorname{P}_\parallel[w,t]^\top} & 0\\
0 & 0 & 0 & e^{\diag\operatorname{P}_\perp[w,t]^\top}
\end{pmatrix}
$$
is a Lie group homomorphism with kernel $N$. Checking that this is a Lie group homomorphism is straightforward. Now let $[v,t,w]\in\mathbb{R}^{d_0}\rtimes\mathbb{R}\times\mathbb{R}^{d-d_0}=G$. Then
$$
\begin{pmatrix}
1 & 0 & 0 & 0\\
v & e^{t\operatorname{J}(\aleph_0)} & 0 & 0\\
0 & 0 & e^{\diag2\pi\imath\operatorname{P}_\parallel[w,t]^\top} & 0\\
0 & 0 & 0 & e^{\diag\operatorname{P}_\perp[w,t]^\top}
\end{pmatrix}=\begin{pmatrix}
1 & 0 & 0 & 0\\
0 & \id & 0 & 0\\
0 & 0 & \id & 0\\
0 & 0 & 0 & \id
\end{pmatrix}
$$
iff
$$
v=0,\quad t\in T_\aleph,\quad\operatorname{P}_\parallel[w,t]^\top\in\mathbb{Z}^k,\quad\operatorname{P}_\perp[w,t]^\top=0.
$$
The latter two conditions can be combined into $\operatorname{P}[w,t]^\top\in\mathbb{Z}^k\oplus0$, which in view of (\ref{PDef}) can be written as
$$
\begin{pmatrix}
w\\
t
\end{pmatrix}=\begin{pmatrix}
w_1 & \ldots & w_{d-d_0+1}\\
t_1 & \ldots & t_{d-d_0+1}
\end{pmatrix}\begin{pmatrix}
m\\
0
\end{pmatrix},\quad m\in\mathbb{Z}^k,
$$
which is equivalent to $[w,t]$ being generated by $[w_1,t_1],\ldots,[w_k,t_k]$ over $\mathbb{Z}$. Thus $[v,t,w]$ is in the kernel iff $[v,t,w]\in N$, which completes the proof. $\Box$
\end{proof}

Below we establish a necessary and sufficient condition for $G/N$ to be a matrix group in terms of the subgroup $N$. We start with a little lemma.

\begin{lemma}\label{AntiSFHeisLemma} Let $X,Y,Z\in\End(\mathbb{C}^n)$ be such that
$$
[X,Y]=Z,\quad[X,Z]=[Y,Z]=0,\quad Z+Z^*=0.
$$
Then $Z=0$.
\end{lemma}
\begin{proof} Since $Z$ is anti-Hermitean, by the spectral theorem for Hermitean matrices it is unitarily diagonalizable with purely imaginary spectrum. Assume without loss of generality that
$$
Z=\imath\bigoplus_{i=1}^q\lambda_i\id_{n_i},\quad\lambda_i\in\mathbb{R},\quad n_1+\ldots+n_q=n.
$$
Then by Proposition 7 in [Ave18] the matrices $X$ and $Y$ are of the form
$$
X=\bigoplus_{i=1}^qX_i,\quad Y=\bigoplus_{i=1}^qY_i,\quad X_i,Y_i\in\End(\mathbb{C}^{n_i}).
$$
Thus $[X_i,Y_i]=\lambda_i\id_{n_i}$ and therefore $\tr[X_i,Y_i]=0=\lambda_i$, $i=1,\ldots,q$, which shows that $Z=0$. $\Box$
\end{proof}

\begin{proposition} Let $G=\mathbb{R}^d\rtimes\mathbb{R}$ be a simply connected almost Abelian group with Lie algebra $\mathbf{L}=\mathbb{R}^d\rtimes\mathbb{R}=\aA(\aleph)$, and let $N\subset G$ be a discrete central subgroup with generators $[v_1,t_1],\ldots,[v_k,t_k]$. Then the following two statements are equivalent:
\begin{itemize}

\item[1.] $\mathbb{R}\left\{(v_1,t_1),\ldots,(v_k,t_k)\right\}\cap[\mathbf{L},\mathbf{L}]=0$

\item[2.] $G/N$ has a faithful (real or complex) matrix representation

\end{itemize}
\end{proposition}
\begin{proof}\textit{2.\,$\Rightarrow$\,1.} Assume towards a contradiction that condition 1. is not satisfied,
$$
[(0,1),(u,0)]=\sum_{i=1}^k\lambda_i(v_i,t_i)\neq0,\quad u\in\mathbb{R}^d,\quad\lambda_i\in\mathbb{R},\quad i=1,\ldots, k,
$$
and let $\sigma:G/N\to\Aut(\mathbb{C}^n)$ be a faithful representation with $d\sigma:\mathbf{L}\to\End(\mathbb{C}^n)$ being its derivative. Because $\exp|_{\mathrm{Z}(\mathbf{L})}=\id$ we have that $\exp(v_i,t_i)=[v_i,t_i]$ and thus $\sigma[v_i,t_i]=e^{d\sigma(v_i,t_i)}=\id$, which implies by Lemma \ref{TalephLemma} that $d\sigma(v_i,t_i)$ is diagonalizable with spectrum in $2\pi\imath\mathbb{Z}$. Moreover, since $[d\sigma(v_i,t_i),d\sigma(v_j,t_j)]=0$ for all $i,j=1,\ldots, k$, there is an invertible $P\in\Aut(\mathbb{C}^n)$ such that
$$
P^{-1}d\sigma(v_i,t_i)P=\imath D_i,\quad D_i^*=D_i,\quad i=1,\ldots,k.
$$
Denote
$$
X\doteq P^{-1}d\sigma(0,1)P,\quad Y\doteq P^{-1}d\sigma(0,u)P,\quad Z\doteq\imath\sum_{i=1}^kD_i.
$$
Then the assumptions of Lemma \ref{AntiSFHeisLemma} are satisfied, implying that
$$
Z=d\sigma([(0,1),(u,0)])=0,\quad [(0,1),(u,0)]\neq0,
$$
which contradicts the fact that $\sigma$ is faithful.

\textit{1.\,$\Rightarrow$\,2.} Let now condition 1. be satisfied. By Lemma \ref{GenReduceLemma} we can assume without loss of generality that $t_2=t_3=\ldots=t_k=0$. If $\mathbf{L}=\mathbb{R}^{d_0}\rtimes\mathbb{R}\oplus\mathbb{R}^{d-d_0}$ is the decomposition as before then condition 1. implies that $v_2,\ldots,v_k\in\mathbb{R}^{d-d_0}$. If $t_1=0$ then condition 1. also requires that $v_1\in\mathbb{R}^{d-d_0}$, which shows that $N\subset\mathbb{R}^{d-d_0}$, and by Proposition \ref{FaithfulRepProp} the quotient group $G/N$ has a faithful matrix representation. If $t_1\neq0$ then applying the automorphism $\Phi\in\Aut(G)$ from Proposition \ref{GenFReduceProp} we obtain the discrete central subgroup $\Phi(N)$ with generators $[0,t_1],[v_2,0],\ldots,[v_k,0]$, which now satisfies $\Phi(N)\subset\mathbb{R}^{d-d_0}\times T_\aleph$. Thus by Proposition \ref{FaithfulRepProp} the quotient group $G/\Phi(N)$ has a faithful matrix representation. But then by Proposition \ref{G_N=G_M} the automorphism $\Phi$ induces an isomorphism between $G/N$ and $G/\Phi(N)$, proving that $G/N$ has a faithful matrix representation, too. $\Box$
\end{proof}

%\end{document}

\quad

%\documentclass{article}

%\input{include.tex}

%\begin{document}

\section*{Connected subgroups of a connected almost Abelian Lie group}\label{CSubGr}
%\subsubsection*{Z. Avetisyan, I. Martin, G. Rakholia, Z. Zhao}

The goal of this section is describing all connected Lie subgroups of a connected almost Abelian Lie group. A connected almost Abelian group can be identified with the quotient group $G/N$ where $G$ is a simply connected almost Abelian Lie group and $N\subset G$ is a discrete normal subgroup (see Proposition \ref{NProp}). The canonical quotient map $\operatorname{q}_N:G\to G/N$ is a Lie group homomorphism, and its derivative $d\operatorname{q}_N$ is an isomorphism of Lie algebras. Thus we can assume without loss of generality that the Lie algebras of both $G$ and $G/N$ are $\aA(\aleph)$. By the Lattice Isomorphism Theorem (Theorem 20 in \cite{DummitFoote2004}) subgroups $H_N\subset G/N$ are exactly the quotients $H/N$ of subgroups $H\subset G$ with $N\subset H\subset G$. However, the complete preimage $\operatorname{q}_N(H_N)\subset G$ may not be a closed subgroup, and we may have to choose a different $H$ with $H/N=H_N$.

We will start from a simply connected almost Abelian Lie group $G$ with Lie algebra $\aA(\aleph)=\mathbb{R}^d\rtimes\mathbb{R}$. By Theorem 5.20 in \cite{Hall2015} to every Lie subalgebra $\mathbf{L}\subset\aA(\aleph)$ there exists a unique connected Lie subgroup $H_\mathbf{L}\subset G$ for which it is the Lie algebra, and conversely, all connected Lie subgroups of $G$ arise in this way.

\begin{remark}\label{LCasesRemark} By Proposition 4 in \cite{Avetisyan2016}, either of the following two possibilities occurs:
\begin{itemize}

\item[1.] $\mathbf{L}=\mathbf{W}\subset\mathbb{R}^d$ is an Abelian Lie subalgebra.

\item[2.] $\mathbf{L}$ is of the form
$$
\mathbf{L}=\left\{(w+tv_0,t)\in\mathbb{R}^d\rtimes\mathbb{R}\,{\big|}\quad w\in\mathbf{W},\quad t\in\mathbb{R}\right\},
$$
where $v_0\in\mathbb{R}^d$ is a fixed element and $\mathbf{W}\subset\mathbb{R}^d$ is an $\ad$-invariant vector subspace. In this case $\mathbf{L}$ is Abelian if and only if $\mathbf{W}\subset\mathrm{Z}(\aA(\aleph))$.

\end{itemize}
\end{remark}
Accordingly, the corresponding connected Lie subgroups $H_\mathbf{L}$ fall into two categories.

\begin{proposition}\label{CSubGrSCGProp} The connected Lie subgroup $H_\mathbf{L}\subset G$ of the simply connected almost Abelian Lie group $G$ with Lie algebra $\mathbf{L}$ as in Remark \ref{LCasesRemark} is given by either of the following two forms, accordingly:
\begin{itemize}

\item[1.]
$$
H_\mathbf{L}=\left\{[w,0]\in\mathbb{R}^d\rtimes\mathbb{R}\,{\big|}\quad w\in\mathbf{W}\right\}=\exp(\mathbf{W})
$$

\item[2.]
$$
H_\mathbf{L}=\left\{\left[w+\frac{e^{t\operatorname{J}(\aleph)}-\id}{\operatorname{J}(\aleph)}v_0,t\right]\in\mathbb{R}^d\rtimes\mathbb{R}\,{\big|}\quad w\in\mathbf{W},\quad t\in\mathbb{R}\right\}\simeq\exp(\mathbf{W})\cdot\mathbb{R}
$$

\end{itemize}
In the second case
$$
\exp(\mathbf{W})\cdot\mathbb{R}=\begin{cases}
\exp(\mathbf{W})\times\mathbb{R}\quad\mbox{if}\quad\mathbf{W}\subset\mathrm{Z}(\aA(\aleph)),\\
\exp(\mathbf{W})\rtimes\mathbb{R}\quad\mbox{else}.
\end{cases}
$$
\end{proposition}
\begin{proof} That $H_\mathbf{L}$ is indeed a Lie subgroup in both cases can be checked directly using, say, the faithful matrix representation $\mathrm{I}$ of Proposition \ref{SCMatRepProp}. In Case 1 the exponential map from Lemma \ref{ExpMapLemma} delivers the desired result immediately. For Case 2, pick an arbitrary $(w_0+t_0v_0,t_0)\in\mathbf{L}$ and let $(-1,1)\ni\tau\mapsto(w(\tau),t(\tau))\in\mathbf{W}\oplus\mathbb{R}$ be a smooth curve with
$$
(w(0),t(0))=(0,0),\quad(w'(0),t'(0))=(w_0,t_0)\in\mathbf{W}\oplus\mathbb{R}.
$$
Then we have
$$
\frac{d}{d\tau}\left[w(\tau)+\frac{e^{t(\tau)\operatorname{J}(\aleph)}-\id}{\operatorname{J}(\aleph)}v_0,t(\tau)\right]\biggr\rvert_{\tau=0}=(w_0+t_0v_0,t_0),
$$
showing that the Lie algebra of $H_\mathbf{L}$ is $\mathbf{L}$. Finally, an automorphism with $\alpha=1$, $\Delta=\id$ and $\gamma=v_0$ from Proposition \ref{AutGProp} can be used to establish the isomorphism between $H_\mathbf{L}$ and $\exp(\mathbf{W})\cdot\mathbb{R}$. $\Box$
\end{proof}

\begin{remark} Proposition \ref{CSubGrSCGProp} easily implies, in particular, that all connected subgroups of a simply connected almost Abelian group are simply connected and closed.
\end{remark}

\begin{remark} By Proposition 11 in \cite{Avetisyan2016}, two almost Abelian Lie subalgebras $\mathbf{L}_1,\mathbf{L}_2\subset\aA(\aleph)$ corresponding to $\ad$-invariant vector subspaces $\mathbf{W}_1,\mathbf{W}_2\subset\mathbb{R}^d$ are isomorphic if and only if $\operatorname{J}(\aleph)|_{\mathbf{W}_1}$ and $\operatorname{J}(\aleph)|_{\mathbf{W}_1}$ are projectively similar. Since both $H_{\mathbf{L}_1}$ and $H_{\mathbf{L}_2}$ are simply connected, we have that $H_{\mathbf{L}_1}\simeq H_{\mathbf{L}_1}$ if and only if $\mathbf{L}_1\simeq\mathbf{L}_2$.
\end{remark}

\begin{remark} By Corollary 5.7 in \cite{Hall2015}, two connected subgroups $H_{\mathbf{L}_1},H_{\mathbf{L}_2}\subset G$ of a simply connected almost Abelian group $G$, associated with Lie algebras $\mathbf{L}_1,\mathbf{L}_2\subset\aA(\aleph)$, respectively, are related by an automorphism $\Phi\in\Aut(G)$ if and only if the Lie algebras are related by the automorphism $d\Pi\in\Aut(\aA(\aleph))$.
\end{remark}

Let us now consider subgroups $H_N\subset G/N$ of connected almost Abelian groups $G/N$.

\begin{lemma} Let $G$ be a Lie group and $N\subset G$ a normal subgroup. Then every connected subgroup $H_N\subset G/N$ is the projection $H_N=H/N$ of a unique connected Lie subgroup $H\subset G$.
\end{lemma}
\begin{proof} The quotient map $\operatorname{q}_N:G\to G/N$ is a surjective Lie group homomorphism, and its derivative $d\operatorname{q}_N:\mathbf{L}_G:\to\mathbf{L}_{G/N}$ is a surjective Lie algebra homomorphism. The preimage $d\operatorname{q}^{-1}\mathbf{L}_{H_N}$ of the Lie algebra of $H_N$ is a Lie subalgebra of $\mathbf{L}_G$, and thus is the Lie algebra of a unique connected subgroup $H\subset G$ (Theorem 5.20 in \cite{Hall2015} or Proposition 5.6.5 in \cite{RudolphSchmidt2013}). The image $\operatorname{q}_N(H)\subset G/N$ is a connected subgroup with Lie algebra $\mathbf{L}_{H_N}$, which by uniqueness must be $\operatorname{q}_N(H)=H_N$. Finally, if $H'\subset G$ is another connected subgroup with $\operatorname{q}_N(H')=H_N$ then $\mathbf{L}_{H'}=\mathbf{L}_H$, so that again by uniqueness $H'=H$. $\Box$
\end{proof}

\begin{remark} Since the projection $H/N$ of a connected subgroup $H\subset G$ is a connected subgroup $H/N\subset G$, we conclude that connected subgroups of $G/N$ are exactly images $H/N$ of connected subgroups $H\subset G$, which were already classified above.
\end{remark}

It remains to find when a given connected subgroup $H/N\subset G/N$ is closed. For this purpose we will first establish a simple fact regarding the relative structure of $H$ and $N$.

\begin{lemma}\label{NBLemma} Let $G$ be a simply connected almost Abelian group, $N\subset G$ a discrete normal subgroup and $H\subset G$ a connected subgroup. Then there exists a subgroup $B\subset N$ such that $N=(N\cap H)\times B$.
\end{lemma}
\begin{proof} We use Proposition \ref{CSubGrSCGProp} to write $H$ in the form $H=\exp(\mathbf{W})$ or $H=\exp(\mathbf{W})\rtimes\mathbb{R}$ (direct or semidirect), with $\mathbf{W}\subset\mathbb{R}^d$ a vector subspace. All we need to show is that the $N\cap H\subset N$ is a pure subgroup. Indeed, let $[v,t]\in N$ and $q\in\mathbb{N}$ such that $[v,t]^q=[qv,qt]\in N\cap H$. Then $qv\in\mathbf{W}$ and thus also $v\in\mathbf{W}$, whence $[v,t]\in N\cap H$. Then by Corollary 28.5 in \cite{Fuchs1970}, $N\cap H$ is a direct factor. $\Box$
\end{proof}

Since
$$
\exp|_{\mathrm{Z}(\aA(\aleph))\oplus\mathbb{R}}:\mathrm{Z}(\aA(\aleph))\oplus\mathbb{R}\to\mathrm{Z}(G)_0\times\mathbb{R}
$$
is a bijection, we can introduce its inverse
$$
\log=[\exp|_{\ker\operatorname{J}(\aleph)\oplus\mathbb{R}}]^{-1}:\mathrm{Z}(G)_0\times\mathbb{R}\to\mathrm{Z}(G)_0\times\mathbb{R}.
$$
For every subset $X\subset\mathrm{Z}(G)_0\times\mathbb{R}$ we denote by $\bbar{X}$ the connected subgroup
$$
\bbar{X}=\exp\left[\mathbb{R}\langle\log(X)\rangle\right],\quad\forall X\subset\mathrm{Z}(G)_0\times\mathbb{R}.
$$
Thus $\bbar{X}\subset G$ is a minimal Lie subgroup containing the set $X$.

\begin{proposition} Let $G$ be a simply connected almost Abelian group, $N\subset G$ a discrete normal subgroup and $H\subset G$ a connected subgroup. Then the connected subgroup $H/N\subset G/N$ is closed if and only if $\bbar{H\cap N}=H\cap\bbar{N}$.
\end{proposition}
\begin{proof} First let us note that
$$
\bbar{H\cap N}\subset H\cap\bbar{N}.
$$
Indeed, $\bbar{H\cap N}\subset\bbar{N}$ is obvious, while $\bbar{H\cap N}\subset H$ follows from $\mathbb{R}\langle\log(H\cap N)\rangle\subset\mathbf{L}_H$, where $\mathbf{L}_H$ is the Lie algebra of $H$. Let by Lemma \ref{NBLemma} $N=(H\cap N)\times B$ for a subgroup $B\subset N$. Since $N$ is a free Abelian group, we have that $\mathbb{R}\langle\log(H\cap N)\rangle\cap\mathbb{R}\langle\log(B)\rangle=0$, and because $N$ is a subgroup of the Abelian Lie group $\mathrm{Z}(G)_0\times\mathbb{R}$, it follows that $\bbar{N}=\bbar{H\cap N}\times\bbar{B}$. Thus
$$
H\cap\bbar{N}=H\cap(\bbar{H\cap N}\times\bbar{B})=\bbar{H\cap N}\times(H\cap\bbar{B}),
$$
$$
\bbar{H\cap N}=H\cap\bbar{N}\quad\Leftrightarrow\quad H\cap\bbar{B}=\{\id\}.
$$
By definition of quotient topology, $H/N\subset G/N$ is closed if and only if the complete preimage $HN\subset G$ is closed. The subgroups $H$ and $\bbar{N}$ are connected, and so is their product $H\bbar{N}$. Since $\bbar{N}\subset G$ is central, both $HN$ and $H\bbar{N}$ are subgroups. Being a connected subgroup, $H\bbar{N}\subset G$ is closed by Proposition \ref{CSubGrSCGProp}. Thus the question is reduced to whether $HN\subset H\bbar{N}$ is closed or not.

$H\cap\bbar{B}\subset\bbar{B}$ is a closed Lie subgroup, hence $\bbar{B}=H\cap\bbar{B}\times C$ where $C\subset\bbar{B}$ is a closed Lie subgroup. It follows that
$$
HN=HB=BH,\quad H\bbar{N}=H\bbar{B}=HC=CH,
$$
and we want to know whether $BH\subset CH$ is closed. Again, by definition of quotient topology, this is equivalent to $BH/H\subset CH/H$ being closed or not. Since $B\cap H=C\cap H=\{\id\}$, the homomorphisms $B\to BH/H$ and $C\to CH/H$ are isomorphisms, therefore $\rank BH/H=\rank B$ and $\dim CH/H=\dim C$, which implies that $\rank BH/H=\dim\bbar{B}\ge\dim CH/H$, and equality holds if and only if $H\cap\bbar{B}=\{\id\}$. If $H\cap\bbar{B}=\{\id\}$ then the homomorphism $\bbar{B}\to\bbar{B}H/H$ is an isomorphism, and $BH/H\subset CH/H=\bbar{B}H/H$ is closed. On the other hand, if $H\cap\bbar{B}\neq\{\id\}$ then $\dim CH/H<\rank BH/H$, therefore $BH/H\subset CH/H$ is dense (see Theorem 6.1 in \cite{StewartTall2002}). $\Box$
\end{proof}

%\end{document}

\quad

\quad

%\documentclass{article}

%\input{../include.tex}

%\begin{document}

%\cite{Freibert2012}

%\bibliographystyle{plain}
%\bibliography{../../Lib/lib}

%\end{document}

\quad
%\documentclass{article}

%\input{include.tex}

%\begin{document}

\section*{Appendix: proof of Lemma \ref{GenReduceLemma}}\label{Appendix}

Let $1<k\in\mathbb{N}$ and $(v_1,t_1),\ldots,(v_k,t_k)\in\mathbb{R}^d\times\mathbb{R}$ such that $t_i=n_it_0$, $n_i\in\mathbb{Z}$ for $i=1,\ldots,k$, where $t_0\in\mathbb{R}$.

\quad

\noindent\textbf{Lemma \ref{GenReduceLemma}}\quad\textit{There exists a change of generators $A\in\mathrm{GL}(\mathbb{Z},k)$ such that
$$
\begin{pmatrix}
u_1 & u_2 & \ldots & u_k\\
s_1 & 0 & \ldots & 0
\end{pmatrix}=\begin{pmatrix}
v_1 & v_2 & \ldots & v_k\\
t_1 & t_2 & \ldots & t_k
\end{pmatrix}\,\cdot\, A.
$$
}

\begin{remark} Here $s_1=d_*\,t_0$, where $d_*=\gcd(n_1,\ldots,n_k)$.
\end{remark}

\begin{proof} The statement amounts to the existence of an $A\in\mathrm{GL}(\mathbb{Z},k)$ such that
$$
(d_*,0,\ldots,0)=(n_1,n_2,\ldots,n_k)\cdot A.
$$
Dividing both sides by $d_*$ we reduce the problem to finding an $A\in\mathrm{GL}(\mathbb{Z},k)$ such that
\begin{equation}
(1,0,\ldots,0)=(\tilde n_1,\tilde n_2,\ldots,\tilde n_k)\cdot A,\label{AProperty}
\end{equation}
where $\tilde n_i=n_i/d_*$ for $i=1,\ldots,k$ and $\gcd(\tilde n_1,\ldots,\tilde n_k)=1$. Denote
$$
d_1\doteq\gcd(\tilde n_2,\tilde n_3,\ldots,\tilde n_k),\quad d_2\doteq\gcd(\tilde n_1,\tilde n_3,\ldots,\tilde n_k),\quad\ldots,\quad d_k\doteq\gcd(\tilde n_1,\tilde n_2,\ldots,\tilde n_{k-1}),
$$
$$
m_1\doteq\frac{\tilde n_1}{d_2d_3\ldots d_k},\quad m_2\doteq\frac{\tilde n_2}{d_1d_3\ldots d_k},\quad\ldots,\quad m_k\doteq\frac{\tilde n_k}{d_1d_2\ldots d_{k-1}},
$$
so that $\tilde n_i=m_id_1d_2\ldots d_k/d_i$ for $i=1,\ldots,k$ and $\gcd(m_i,m_j)=1$ for all $i\neq j$.

We will define the auxiliary matrix $B\in\mathrm{GL}(\mathbb{Z},k)$ depending on whether $k$ is even or odd. If $k=2r$ then define numbers $q_1,\ldots,q_k\in\mathbb{Z}$ such that, by B\'ezout's identity, $m_{2j-1}q_{2j-1}+m_{2j}q_{2j}=1$ for $j=1,\ldots,r$. Then $B$ is the following matrix,
$$
B=\begin{pmatrix}
q_1 & 0 & \ldots & 0 & -m_2 & 0 & \ldots & 0\\
q_2 & 0 & \ldots & 0 & m_1 & 0 & \ldots & 0\\
0 & q_3 & \ldots & 0 & 0 & -m_4 & \ldots & 0\\
0 & q_4 & \ldots & 0 & 0 & m_3 & \ldots & 0\\
\vdots & \vdots & \vdots & \vdots & \vdots & \vdots & \vdots & \vdots\\
0 & 0 & \ldots & q_{k-1} & 0 & 0 & \ldots & -m_k\\
0 & 0 & \ldots & q_k & 0 & 0 & \ldots & m_{k-1}
\end{pmatrix}.
$$
It is easy to see that indeed, $|\det B|=1$ and
$$
(m_1,\ldots,m_r\,|\,m_{r+1},\ldots,m_k)\cdot B=(1,\ldots,1\,|\,0,\ldots,0).
$$
If on the other hand $k=2r+3$ then we introduce the numbers $q_1,\ldots,q_{k-3}\in\mathbb{Z}$ as before, $m_{2j-1}q_{2j-1}+m_{2j}q_{2j}=1$ for $j=1,\ldots,r$. Then, again powered by B\'ezout's identity, we define integers $q_{k-2},q_{k-1},q_k,s_{k-2},s_k\in\mathbb{Z}$ such that $m_{k-2}q_{k-2}+m_{k-1}q_{k-1}+m_kq_k=1$ and $m_{k-2}s_{k_2}+m_ks_k=1$. Now the matrix $B$ is as follows,
$$
B=\begin{pmatrix}
q_1     & 0         & \ldots & 0       & 0       & -m_2     & 0         & \ldots & 0        & 0         & 0              \\
q_2     & 0         & \ldots & 0       & 0       & m_1      & 0         & \ldots & 0        & 0         & 0              \\
0       & q_3       & \ldots & 0       & 0       & 0        & -m_4      & \ldots & 0        & 0         & 0              \\
0       & q_4       & \ldots & 0       & 0       & 0        & m_3       & \ldots & 0        & 0         & 0              \\
\vdots  & \vdots    & \vdots & \vdots  & \vdots  & \vdots   & \vdots    & \vdots & \vdots   & \vdots    & \vdots         \\
0       & 0         & \ldots & q_{k-4} & 0       & 0        & 0         & \ldots & -m_{k-4} & 0         & 0              \\
0       & 0         & \ldots & q_{k-3} & 0       & 0        & 0         & \ldots & m_{k-3}  & 0         & 0              \\
0       & 0         & \ldots & 0       & q_{k-2} & 0        & 0         & \ldots & 0        & m_k       & -m_{k-1}s_{k-2}\\
0       & 0         & \ldots & 0       & q_{k-1} & 0        & 0         & \ldots & 0        & 0         & 1              \\
0       & 0         & \ldots & 0       & q_k     & 0        & 0         & \ldots & 0        & -m_{k-2}  & -m_{k-1}s_k
\end{pmatrix}.
$$
Again, it can be observed that $|\det B|=1$ and
$$
(m_1,\ldots,m_{r+1}\,|\,m_{r+2},\ldots,m_k)\cdot B=(1,\ldots,1\,|\,0,\ldots,0).
$$
For every $l\in\mathbb{N}$ denote by $C_l\in\mathrm{GL}(\mathbb{Z},l)$ the matrix
$$
C_l=\begin{pmatrix}
1 & -1 & 0 & \ldots & 0\\
0 & 1 & -1 & \ldots & 0\\
\vdots & \vdots & \vdots & \vdots & \vdots\\
0 & 0 & 0 & \ldots & 1
\end{pmatrix}.
$$
It can be easily seen that
$$
(1,\ldots,1,0,\ldots,0)\cdot\left[C_l\oplus\id_{k-l}\right]=(1,0,\ldots,0),
$$
where exactly $l$ non-zero entries are on the left-hand side. Finally, we define the auxiliary matrix $D\in\mathrm{GL}(\mathbb{Z},k)$ by $D=B\cdot\left[C_{r}\oplus\id_r\right]$ or $D=B\cdot\left[C_{r+1}\oplus\id_{r+2}\right]$ depending on whether $k=2r$ or $k=2r+3$, respectively. From what we had above it is clear that
\begin{equation}
(m_1,m_2,\ldots,m_k)\cdot D=(1,0,\ldots,0).\label{DProperty}
\end{equation}
This property of $D$ (as the more general (\ref{AProperty})) is remarkable. It means that the first column $D_{*1}$ is a B\'ezout tuple for $(m_1,\ldots,m_k)$, while the $k-1$ other columns $D_{*2},\ldots,D_{*k}$ span the hyperplane orthogonal to $(m_1,\ldots,m_k)$. It is clear that any other B\'ezout tuple for $(m_1,\ldots,m_k)$ is of the form $D\cdot(1,\lambda_2,\ldots,\lambda_k)^\top$ with $(\lambda_2,\ldots,\lambda_k)\in\mathbb{Z}^{k-1}$, and replacing the first column $D_{*1}$ in $D$ with any other such tuple will not violate (\ref{DProperty}).

Remember that $\gcd(\tilde n_1,\ldots,\tilde n_k)=1$, so that there exists a B\'ezout tuple $(p_1,\ldots,p_k)\in\mathbb{Z}^k$ such that $\tilde n_1p_1+\ldots+\tilde n_kp_k=1$. It follows that
$$
m_1\frac{d_1\ldots d_k}{d_1}p_1+\ldots+m_k\frac{d_1\ldots d_k}{d_k}p_k=1,
$$
that is, $d_1\ldots d_k\cdot(p_1/d_1,\ldots,p_k/d_k)$ is a B\'ezout tuple for $(m_1,\ldots,m_k)$, and we can afford setting $D_{*1}=d_1\ldots d_k\cdot(p_1/d_1,\ldots,p_k/d_k)$ without changing (\ref{DProperty}) or $\det D$.

The desired matrix $A$ can be constructed as below,
$$
A=\begin{pmatrix}
p_1     & D_{1,2}d_1    & \ldots    & D_{1,k}d_1    \\
\vdots  & \vdots        & \vdots    & \vdots        \\
p_k     & D_{k,2}d_k    & \ldots    & D_{k,k}d_k
\end{pmatrix}.
$$
We check that (\ref{AProperty}) is true. Indeed, $(\tilde n_1,\ldots,\tilde n_k)\cdot(p_1,\ldots,p_k)^\top=1$ by definition, whereas
$$
(\tilde n_1,\ldots,\tilde n_k)\cdot(D_{1,j}d_1,\ldots,D_{k,j}d_k)^\top=d_1\ldots d_k\cdot(m_1,\ldots,m_k)\cdot(D_{1,j},\ldots,D_{k,j})=0,\quad j=2,\ldots,k
$$
follows from (\ref{DProperty}). Finally,
$$
\det A=\left|\diag(d_1,\ldots,d_k)\cdot\begin{pmatrix}
p_1/d_1     & D_{1,2}    & \ldots    & D_{1,k}    \\
\vdots      & \vdots     & \vdots    & \vdots     \\
p_k/d_1     & D_{k,2}    & \ldots    & D_{k,k}
\end{pmatrix}\right|=
$$
$$
\left|\begin{matrix}
d_1\ldots d_kp_1/d_1     & D_{1,2}    & \ldots    & D_{1,k}    \\
\vdots      & \vdots     & \vdots    & \vdots     \\
d_1\ldots d_kp_k/d_1     & D_{k,2}    & \ldots    & D_{k,k}
\end{matrix}\right|=\det D,
$$
which proves that $A\in\mathrm{GL}(\mathbb{Z},k)$. $\Box$
\end{proof}

%\end{document}

\end{document}